\documentclass[preprint,11pt]{elsarticle}
\usepackage{subcaption}
\usepackage{amsmath}
\usepackage{amsthm}
\usepackage{multirow}
\usepackage{tikz}
\usepackage{soul}
\usepackage{tikz-cd}
\usepackage{lipsum}
\usepackage{import}

\usepackage{amsfonts}
\usepackage{graphics}
\usepackage[
letterpaper, top=1in, bottom=1in, left=1in, right=1in]{geometry}
\usepackage{xcolor}
\usepackage{amssymb}
\usepackage{psfrag}
\usepackage{graphicx}
\usepackage[utf8]{inputenc}
\usepackage{bm}
\usepackage{mathtools}
\usepackage{epstopdf}
\usepackage[colorlinks,linkcolor=black,anchorcolor=black,citecolor=black]{hyperref}
\usepackage{cleveref}
\usepackage{setspace}
\usepackage{amsmath,amssymb,bbm}
\usepackage[utf8]{inputenc}
\usepackage{url}
\usepackage{color}
\usepackage{booktabs}
\usepackage[linesnumbered]{algorithm2e}
\usepackage{placeins}

\usepackage[title]{appendix}
\usepackage{lmodern}
\usepackage{lineno}

\newtheorem*{Theo}{Theorem}

\linespread{1.2}


\makeatletter
\newcommand{\oset}[3][0ex]{%
  \mathrel{\mathop{#3}\limits^{
    \vbox to#1{\kern-2\ex@
    \hbox{$\scriptstyle#2$}\vss}}}}
\makeatother

\newcommand{\dx}{{d_{\cX}}}
\newcommand{\dy}{{d_{\cY}}}

%



%




%

%


\newcommand{\R}{\mathbb{R}}

%

%

%

\newcommand{\cA}{\mathcal{A}}

\newcommand{\cD}{\mathcal{D}}
\newcommand{\cE}{\mathcal{E}}

\newcommand{\cG}{\mathcal{G}}

\newcommand{\cL}{\mathcal{L}}
\newcommand{\cM}{\mathcal{M}}
\newcommand{\cN}{\mathcal{N}}

\newcommand{\cP}{\mathcal{P}}
\newcommand{\cQ}{\mathcal{Q}}
\newcommand{\cR}{\mathcal{R}}

\newcommand{\cU}{\mathcal{U}}

\newcommand{\cX}{\mathcal{X}}
\newcommand{\cY}{\mathcal{Y}}




%

%

%

%

%




\DeclareMathAlphabet\mathbfcal{OMS}{cmsy}{b}{n}
\usepackage{amsmath,amssymb,bbm}
\usepackage[utf8]{inputenc}

\makeatletter
\def\ps@pprintTitle{%
 \let\@oddhead\@empty
 \let\@evenhead\@empty
 \def\@oddfoot{\reset@font\hfil\thepage\hfil}
 \let\@evenfoot\@oddfoot
}
\makeatother
\begin{document}

\begin{frontmatter}

\title{\textbf{Learning Neural Operators on Riemannian Manifolds}}

\author[a,1]{Gengxiang Chen}
\author[b,1]{{Xu Liu}}
\author[a]{Qinglu Meng}
\author[a]{Lu Chen}
\author[a]{Changqing Liu}
\author[a]{{Yingguang Li}\corref{cor1}}
\address[a]{College of Mechanical and Electrical Engineering, Nanjing University of Aeronautics and Astronautics, 210016, Nanjing, China}

\address[b]{School of Mechanical and Power Engineering, Nanjing Tech University, 211816, Nanjing, China}

\cortext[cor1]{Corresponding author: liyingguang@nuaa.edu.cn}
\fntext[fn1]{Gengxiang Chen and Xu Liu contributed equally.}

\begin{abstract}
In Artificial Intelligence (AI) and computational science,  learning the mappings between functions (called operators) defined on complex computational domains is a common theoretical challenge. Recently, Neural Operator emerged as a promising framework with a discretisation-independent model structure to break the fixed-dimension limitation of classical deep learning models. However, existing operator learning methods mainly focus on regular computational domains, and many components of these methods rely on Euclidean structural data. In real-life applications, many operator learning problems are related to complex computational domains such as complex surfaces and solids, which are non-Euclidean and widely referred to as Riemannian manifolds. Here, we report a new concept, Neural Operator on Riemannian Manifolds (NORM), which generalises Neural Operator from being limited to Euclidean spaces to being applicable to Riemannian manifolds, and can learn the mapping between functions defined on any real-life complex geometries, while preserving the discretisation-independent model structure. NORM shifts the function-to-function mapping to finite-dimensional mapping in the Laplacian eigenfunctions' subspace of geometry, and holds universal approximation property in learning operators on Riemannian manifolds even with only one fundamental block. The theoretical and experimental analysis prove that NORM is a significant step forward in operator learning and has the potential to solve complex problems in many fields of applications sharing the same nature and theoretical principle.
\end{abstract}

\begin{keyword}
{Deep learning, Neural operator, Partial differential equations, Riemannian manifold}
\end{keyword}

\end{frontmatter}

\let\thefootnote\relax\footnote{The source code is available at \href{https://github.com/gengxiangc/NORM}{https://github.com/gengxiangc/NORM}.}

\section{Introduction }\label{sec:intro}

Many scientific discoveries and engineering research activities involve the exploration of the intrinsic connection and relationship between functions \cite{wang2023scientific, degrave2022magnetic}. In mathematics, the mapping between two functions is called the operator \cite{li2020neural}. Establishing the operator defined on complex computational domains has been a theoretical challenge \cite{li2022fourier}. One ubiquitous example of operator is the solution operator of Partial Differential Equations (PDEs) \cite{wang2021learning}, which provide the foundational descriptions of many nature laws. Solving PDEs under different parameters, initial and boundary conditions can be regarded as finding the solution operators \cite{chen2018neural, brunton2023machine}. A more practical example is that, for nuclear fusion, establishing the operator that links the input controlling coil voltage to the plasma distribution in the complex tokamak vessel could enable rapid and accurate forecasting of plasma field evolution, thus pointing to a promising direction towards sustainable fusion \cite{gopakumar2023fourier, degrave2022magnetic}. There are also requirements for establishing operators in a wide range of other complex field prediction scenarios, such as predicting the blood flow dynamics of the human body for the purpose of cardiovascular disease diagnosis and treatment \cite{corti2022impact, kissas2020machine}, and predicting the pressure field of an aircraft for fuselage structure optimisation \cite{sabater2022fast, taverniers2022accelerating}. Physical experiments and numerical simulations are commonly used methods for finding the mapping between two functions (i.e. operators) \cite{karniadakis2021physics, wang2023scientific}. Due to the complex process of the underlying operators, especially when involving complex computational domains like tokamak vessels, human organs or aircraft structures, the high computational and experimental costs of these methods are prohibitive for real-world situations \cite{azzizadenesheli2023neural, ramezankhani2021making}.

Artificial Intelligence (AI) techniques recently emerged as a promising paradigm shift for learning operators directly from data \cite{wang2023scientific}. Classical deep learning methods, such as Convolutional Neural Networks (CNNs) and deconvolution techniques, can learn the mapping between discretised picture-like uniform grid data to approximate the operator \cite{chen2019u, wu2023solving}. Graph Neural Networks (GNNs) can represent the computational domain as a graph and then learn the properties of the nodes through message passing \cite{velickovic2017graph, chen2021graph}. However, since the network structure and the parameterisation of CNNs and GNNs heavily depend on the discretisation-resolution of the computational domain \cite{lu2022comprehensive}, the high-dimensional discretisation of the computational domains will bring significant computational burdens to model training, and lead to slow convergence or even divergence when learning general nonlinear operators \cite{you2022nonlocal}. Recently, Neural Operators (NOs), such as DeepONet \cite{lu2021learning} and Fourier Neural Operator (FNO) \cite{li2020fourier}, were proposed as a new deep learning approach that could directly learn mappings between functions on continuous domains with a discretisation-independent model structure (i.e., the parameterisation of the model is independent of the discretisation of the computational domain) \cite{kovachki2021neural}. Despite the significant success of NOs, they mainly focus on learning the mapping between functions defined on regular computational domains (data in the form of a picture-like uniform grid), and many components of these methods rely on Euclidean structural data, for example, Fast Fourier Transform in FNO \cite{li2020fourier}, image convolution layer in U-shaped Neural Operator (UNO) \cite{rahman2022u}, and Wavelet transform in Wavelet Neural Operator (WNO) \cite{gupta2021multiwavelet}. However, real-life applications are more complex and many are in irregular computational domains. Existing NOs often have to convert irregular data to the form as regular uniform grid by coordinate transformation \cite{li2022fourierGeo, gao2021phygeonet} or grid interpolation \cite{lu2022comprehensive, seiler2015resampling}. However, coordinate transformation techniques are normally limited to converting simple 2-dimensional (2D) irregular computational domains due to the poor intrinsic representation \cite{li2022fourierGeo, lu2022comprehensive}, whilst grid interpolation often leads to high-dimensional discretisation and thus brings significant computational burdens to model training, especially for 3-dimensional (3D) computational domains \cite{wu2023solving}. Therefore, existing NOs have limitations in solving operator learning problems of real-life applications with irregular computational domains, including complex surfaces and solids, which are non-Euclidean structural data, and widely referred to as Riemannian manifolds.

This research proposed a deep learning framework with a new concept called Neural Operator on Riemannian Manifolds (NORM), as shown in Fig. \ref{fig_NORM_method}a. NORM could break the limitations of existing NOs and extend the applicability from Euclidean spaces to Riemannian manifolds. NORM can learn the mapping between functions defined on any Riemannian manifolds, including 2D and 3D computational domains, while maintaining a model structure independent of discretisation. Compared with learning operators directly in the Euclidean space, the fundamental blocks of NORM shift the function-to-function mapping to the finite-dimensional mapping in the Laplacian eigenfunctions’ subspace of geometry (Fig. \ref{fig_NORM_method}c). Because Laplacian eigenfunctions have been proven to be the optimal basis for approximating functions on Riemannian manifolds \cite{aflalo2015optimality}, NORM can learn the global geometric information effectively and accurately without increasing the complexity of parameterisation. Besides, we have proved that NORM could hold the universal approximation property even with only one fundamental block. The effectiveness of the proposed framework was demonstrated through several different tasks in science and engineering, including learning solution operators for classical PDEs, composite workpiece deformation prediction and blood flow dynamics prediction.

\begin{figure}[h] 
\centering
\includegraphics[width=\textwidth]{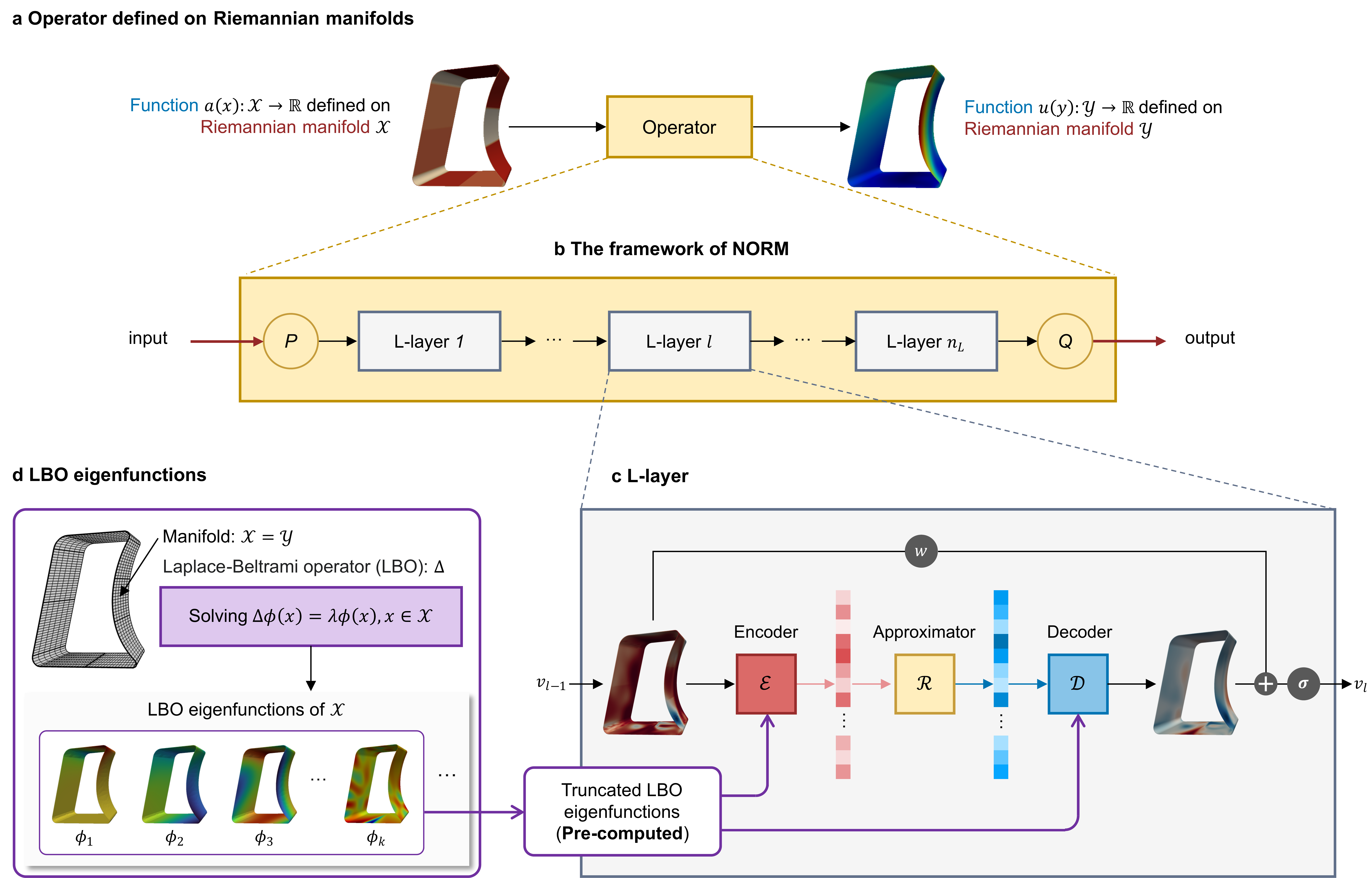}
\caption{\textbf{The illustration of Neural Operator on Riemannian Manifolds (NORM).} \textbf{a}, Operators defined on Riemannian manifolds, where the input function and output function can be defined on the same or different Riemannian manifolds. The example for this illustration is the operator learning problem of the composite curing case, where the input temperature function and the output deformation function are both defined on the same manifold, the composite part. \textbf{b}, The framework of NORM, consists of two feature mapping layers (P and Q) and multiple L-layers. \textbf{c}, The structure of L-layer, consists of the encoder-approximator-decoder block, the linear transformation, and the non-linear activation function. \textbf{d}, Laplace-Beltrami Operator (LBO) eigenfunctions for the geometric domain (the composite part). }
\label{fig_NORM_method}
\end{figure}

\section{Neural operator on Riemannian manifolds}\label{sec:NORM}

\subsection{Problem definition} 

Learning operators on Riemannian manifolds refers to learning a mapping between two functions defined on Riemannian manifolds, as shown in Fig. \ref{fig_NORM_method}a. Denote $\cG: \cA(\cX;\R) \rightarrow \cU(\cY;\R)$ a continuous operator, namely the underlying mapping between the input and the output functions. The input $a \in \cA\left(\cX; \R\right)$ is a function $a(x): \cX \rightarrow \R$, $x \in \cX$, the output $u \in \cU \left(\cY; \R\right)$ is a function $u(y): \cY \rightarrow \R$, $y \in \cY$. $\cX$ and $\cY$ are Riemannian manifolds. Assuming that both $\cA$ and $\cU$ are $L^2$ spaces, then, the problem of learning operator on Riemannian manifolds is to learn a parameterised operator $\cG_{\theta}$ to approximate $\cG$, i.e. $\cG_{\theta} \approx \cG, \theta \in \R^p$. 

Since the input function $a$ and the output function $u$ are both defined on Riemannian manifolds, the obvious solution is to transfer them into a new representation that can be processed with existing Euclidean learning models. Ideally, the solution should be feasible and consistent for any functions defined on Riemannian manifolds. At the same time, the new representation should be low-dimensional while maintaining the information of the original functions. Therefore, we first propose a simple approximation block with an encoder-approximator-decoder structure to transfer the mapping between functions on Riemannian manifolds to a finite-dimensional mapping on Euclidean space.

The approximation block for learning operators on Riemannian manifolds can be defined as a mapping $\cN: \cA(\cX;\R) \rightarrow \cU(\cY;\R)$ of the form $\cN= \cD \circ \cR \circ \cE$, where $\cE:\cA(\cX;\R) \rightarrow\ \R^{\dx}$ denotes the encoder that maps the function on manifold $\cX$ to Euclidean space, $\cR: \R^{\dx} \rightarrow \R^{\dy}$ is an approximator, a learning model for Euclidean data, $\cD:\R^{\dy}\rightarrow\ \cU(\cY;\R)$ is an inverted mapping to recover the prediction function on manifold $\cY$. 

Similar encoder-approximator-decoder structures were also applied in learning mappings between functions defined on Euclidean space \cite{bhattacharya2021model, seidman2022nomad}. To learn operators on Riemannian manifolds, the primary challenge lies in how to design the encoder and decoder mapping to process functions on manifolds without increasing the model complexity. These two mappings would not only influence the feature extraction capability of the learning model, but also determine whether the model holds universal approximation property. 

\subsection{Constructing mappings using Laplacian}
The discretisation-independent target of the neural operator reminds us of the mesh-free spectral method in PDE solving \cite{rai2021spectral}. Intuitively, the spectrum of manifolds could naturally describe the intrinsic information of operators on manifolds. The ideal choice of the spectrum for operator learning is the eigenfunctions of the Laplacian, which is a set of orthonormal basis \cite{patane2018laplacian}, and has been proven to be the optimal basis for approximating functions defined on Riemannian manifolds \cite{aflalo2015optimality, aflalo2013spectral}. Therefore, the encoder $\cE$ and the decoder $\cD$ could be constructed as the spectral decomposition and the spectral reconstruction on the corresponding Laplacian eigenfunctions. 

The Laplacian occurs in a wide range of differential equations describing science and engineering problems, such as the heat transfer function, Poisson's equation, diffusion equation, and wave equation \cite{aflalo2013spectral}. For the Euclidean space $\mathbb{R}^{d}$ and a twice-differentiable function $f$, the Laplacian $\Delta f$ is a second-order differential operator defined as the divergence $\nabla \cdot$ of the gradient $\nabla f$, that is $\Delta f=\nabla \cdot \nabla f=\nabla^2 f$. The eigenvalue problem for the Laplacian can be defined as $\Delta \phi_i =\lambda_i \phi_i$, where the $\lambda_i$ ($\lambda_1 \leq \lambda_2 \leq \ldots$) and $\phi_i(x)$ that satisfying this equation are defined as the eigenvalues and the corresponding eigenfunctions. Actually, the Fourier basis $e^{2 \pi \mathrm{i} k x}$ is also the eigenfunction of the Laplacian with the eigenvalue $\lambda = -(2 \pi k)^2 $ \cite{TERRY_FFT}. 

Since the divergence operator $\nabla \cdot$ and gradient operator $\nabla f$ can also be defined on manifolds with Riemannian metric $g$, the Laplacian $\Delta f$ can be naturally extended to the Riemannian manifold, which is also called the Laplace–Beltrami operator (LBO) \cite{reuter2006laplace}. Therefore, we could obtain the Laplacian spectrum of manifolds in a similar way as in Euclidean space, as shown in Fig. \ref{fig_NORM_method}d.

For Riemannian manifold $\cM$, the LBO eigenfunctions $\phi_i(x)$ is a set of orthonormal bases for the Hilbert space $L^2(\cM)$. It can be proved that a finite number of leading LBO eigenfunctions can approximate functions on manifolds with any accuracy \cite{aflalo2015optimality}. Therefore, for the approximation block $\cN= \cD \circ \cR \circ \cE$, the encoder $\cE$ can be defined as the spectral decomposition on the LBO eigenfunctions $\phi_{\cX,i}$ of the input manifold $\cX$: 
\begin{equation}
\cE:{\cA} \rightarrow\ \R^{\dx}, \quad \cE (a) := (\left\langle a, \phi_{\cX,1} \right\rangle, \ldots, \left\langle a, \phi_{\cX,\dx} \right\rangle), \quad \forall a \in \cA
\end{equation}

And the decoder can be defined as the spectral reconstruction on the LBO eigenfunctions $\phi_{\cY,i}$ of the output manifold $\cY$: 
\begin{equation}
\cD:\R^{\dy}\rightarrow {\cU}, \quad \cD (\beta) = \sum_{i=1}^{\dy} \beta_i \phi_{\cY,i} \quad \forall \, \beta \in \R^{\dy}
\end{equation}

With the defined encoder $\cX$ and decoder $\cD$, an approximation block $\cN= \cD \circ \cR \circ \cE$ could potentially learn the mappings between functions on manifolds with a simple parameterised Euclidean learning model $\cR$. Since LBO can be defined on any Riemannian manifold, the block $\cN$ can naturally deal with any complex geometric domain, which breaks the limitations that existing neural operators relying on Euclidean structured data. Meanwhile, the approximation block holds the discretisation-independent property, because $\cR$ is parameterised on Euclidean spaces with size only related to the truncated eigenfunctions of the input and output manifolds. 

Although Laplacian is defined mathematically on smooth domains, practical numerical computation typically requires discrete approximations of domains, such as meshes or point clouds. The LBOs of common geometric meshes have been strictly defined in the differential geometry field \cite{alexa2020properties}, including triangular mesh, quadrilateral mesh, and tetrahedral mesh. In Supplementary Materials S2.1, an example of discretised LBO for triangular mesh is provided.

\subsection{Framework of Neural Operator on Riemannian Manifold}
The approximation block $\cN= \cD \circ \cR \circ \cE$ can transfer the mapping between functions on Riemannian manifolds to a finite-dimensional Euclidean space learning problem. However, one basic block only approximates the target operator by a linear subspace, which is inefficient in extracting non-linear low-dimensional structures of data. Here, we propose a new deep learning framework, the Neural Operator on Riemannian Manifolds (NORM), that consists of multiple layers and in which the approximation block $\cN$ constitutes one layer of the model, like the convolution layer in traditional CNN. 

We start from a common situation, assuming the input and output functions are defined on the same manifold $\cM$, i.e. $\cX = \cY = \cM$. The structure of NORM can be represented as the form shown in Fig. \ref{fig_NORM_method}b, consisting of two feature mapping layers $\cP$, $\cQ$ and $n_{L}$ hidden layers. The shallow network $\cP$ lifts the input function $a$ to get $v_0 =\cP(a)$, where $\cP: L^2(\cM;\R) \rightarrow L^2(\cM;\R^{d_v})$, so as to expand the dimension of features to increase the representation ability, similar to the convolution channel expansion in CNN. Multiple hidden layers, defined as the Laplace layer, or L-layer (Fig. \ref{fig_NORM_method}c), would update the input function iteratively, such as $v_{l-1} \mapsto v_{l} = \cL_{l}(v_{l-1})$ in the L-layer $l$. After that, the final shallow network $Q$ would project the high-dimensional features to the output dimension, namely $u=Q \left(v_{n_{L}}\right)$, where $\cQ: L^2(\cM;\R^{d_v}) \rightarrow L^2(\cM;\R)$. The iterative structure can be represented as: 

\begin{equation}\label{eq:no_form}
\mathcal{G}_\theta(a)= \cQ\circ \cL_{n_{L}} \circ \cL_{n_{L}-1} \circ \cdots \circ \cL_1 \circ \cP(a)
\end{equation}

The iteration of the hidden layers is given as follows: 
\begin{equation}\label{eq:layer}
v_{l} = \cL_{l}(v_{l-1})(x): =\sigma\left(W_l v_{l-1}(x) + b_l(x) + \cN(v_{l-1})(x)\right), \quad \forall x \in \cM
\end{equation}
where the linear transformations $W_l \in \R^{d_v \times d_v}$ and the bias $b_l \in \R^{d_v}$ are defined as pointwise mapping. $\sigma$ is the non-linear activation function like in the traditional neural network. Note that, the LBO eigenfunctions required in the approximation block can be pre-computated before training the model, as shown in Fig. \ref{fig_NORM_method}d. The detailed implementation of the discredited version of the approximation block $\cN(v)$ is provided in Supplementary Materials S1.1.

The above definition introduces the NORM structure where the input and output are defined on the same manifold. Nevertheless, the structure can be easily generalised to the settings where the input and output are defined on different manifolds and several different structures are introduced in Supplementary Materials S1.2.

Note that the parameterisation of NORM is independent of the discretisation of the input and output functions, because all operations are defined directly in the function spaces on manifolds rather than the Euclidean coordinate spaces. $\cP$, $\cQ$ are learnable neural networks between finite-dimensional Euclidean spaces and have the same point-wise parameterisation for all $x \in \cM$. Therefore, NORM can learn the mappings between functions on any Riemannian manifolds while maintaining the discretisation-independent property. 

\subsection{Universal Approximation of NORM}
Many recent studies have investigated the universal approximation properties of neural operators between functions on Euclidean spaces \cite{lanthaler2022error, kovachki2021neural}. This section will show the advantage of the proposed method that even one approximation block $\cN$ of NORM holds the universal approximation ability in learning operators between functions defined on Riemannian manifolds. 

Let $\cN= \cD \circ \cR \circ \cE$ be a neural operator for the continuous mapping $\cA(\cX;\R) \rightarrow \cU(\cY;\R)$, and $\cR : \R^{\dx} \rightarrow \R^{\dy}$ represents a neural network that has universal approximation property. The encoder is defined as: $ \cE:{\cA} \rightarrow\ \R^{\dx}$ and $\cE (a) := (\left\langle a, \phi_{\cX,1} \right\rangle, \ldots, \left\langle a, \phi_{\cX,\dx} \right\rangle), \forall a \in \cA$. The decoder is defined as $\cD:\R^{\dy}\rightarrow {\cU}$, and $\cD (\beta) = \sum_{i=1}^{\dy} \beta_i \phi_{\cY,i}, \forall \, \beta \in \R^{\dy}$. $\cX$ and $\cY$ are Riemannian manifolds. $\cA$ and $\cU$ are $L^2$ spaces. $\phi_{\cX,i}$ and $\phi_{\cY,i}$ are LBO eigenfunctions of manifolds $\cX$ and $\cY$, respectively. It should be noted that $\cN= \cD \circ \cR \circ \cE$ is a basic block of the NORM, and also can be treated as the simplified version of NORM. Therefore, the universal approximation property of $\cN $ could guarantee the universal approximation property of the more complex NORM framework. The universal approximation theorem of neural operators on Riemannian manifolds is as follows:

\begin{Theo} \textbf{Universal approximation theorem for neural operators on Riemannian manifolds.}
Let $\cG: \cA(\cX;\R) \rightarrow \cU(\cY;\R)$ be a Lipschitz continuous operator, $K \in \cA$ is compact set. Then for any $\epsilon > 0$, there exists a neural operator $\cN= \cD \circ \cR \circ \cE$, such that:
\begin{equation}
\sup _{a \in K}\|\mathcal{G}(a) -\mathcal{N}(a)\|_{L^2} \leq \epsilon
\end{equation}
\label{ua_norm}
\end{Theo}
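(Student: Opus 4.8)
The plan is to realise $\cN$ as an encode--process--decode approximation of $\cG$ and to control three independent sources of error: truncation of the output spectrum, truncation of the input spectrum, and the finite-dimensional neural-network approximation. Write $P_{\cX}a=\sum_{i=1}^{\dx}\langle a,\phi_{\cX,i}\rangle\,\phi_{\cX,i}$ for the orthogonal projection of $\cA=L^2(\cX)$ onto the span of the first $\dx$ input eigenfunctions, and $P_{\cY}$ for the analogous projection of $\cU=L^2(\cY)$ onto the first $\dy$ output eigenfunctions. Note that $\cE$ is a bounded linear map with $\|\cE\|\le 1$ (Bessel), and $\cD$ is a linear isometry onto $\Span\{\phi_{\cY,1},\dots,\phi_{\cY,\dy}\}$ since the $\phi_{\cY,i}$ are orthonormal, so $\|\cD(\beta)-\cD(\beta')\|_{L^2}=|\beta-\beta'|$. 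The natural target for the Euclidean learner $\cR$ is the genuinely finite-dimensional map $F:\R^{\dx}\to\R^{\dy}$, $F=\cE_{\cY}\circ\cG\circ\cD_{\cX}$, where $\cD_{\cX}:\R^{\dx}\to\cA$ reconstructs a function from its coefficients and $\cE_{\cY}:\cU\to\R^{\dy}$ reads off the first $\dy$ output coefficients; with this choice $\cD\circ F\circ\cE=P_{\cY}\circ\cG\circ P_{\cX}$. This yields the decomposition
\[
\cG(a)-\cN(a)
= \bigl[\cG(a)-P_{\cY}\cG(a)\bigr]
+ \bigl[P_{\cY}\cG(a)-P_{\cY}\cG(P_{\cX}a)\bigr]
+ \bigl[P_{\cY}\cG(P_{\cX}a)-\cN(a)\bigr].
\]

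First I would fix the output dimension $\dy$. Since $\cG(a)\in L^2(\cY)$, the first bracket equals the tail $\sum_{i>\dy}|\langle\cG(a),\phi_{\cY,i}\rangle|^2$ of the eigen-expansion, which tends to $0$ for each fixed $a$. To obtain a bound uniform over $K$ I would use that $\cG(K)$ is compact (continuous image of a compact set) and invoke the Hilbert-space fact that a relatively compact set has equi-small tails in any orthonormal basis, so $\sup_{a\in K}\|\cG(a)-P_{\cY}\cG(a)\|_{L^2}\to 0$ as $\dy\to\infty$; choose $\dy$ making this $<\eps/3$. Next, fixing the input dimension $\dx$: since $\|P_{\cY}\|\le 1$ and $\cG$ is $L$-Lipschitz, the second bracket is at most $\|\cG(a)-\cG(P_{\cX}a)\|\le L\,\|a-P_{\cX}a\|$; applying the same equi-small-tails argument to the compact set $K\subset\cA$ gives $\sup_{a\in K}\|a-P_{\cX}a\|\to 0$, so I can pick $\dx$ making $L\sup_{a\in K}\|a-P_{\cX}a\|<\eps/3$.

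With $\dx,\dy$ fixed, the third bracket is handled by the network. Because $\cD$ is an isometry, $\|P_{\cY}\cG(P_{\cX}a)-\cN(a)\|_{L^2}=\|\cD(F(\cE a))-\cD(\cR(\cE a))\|_{L^2}=|F(\cE a)-\cR(\cE a)|$. The set $\cE(K)\subset\R^{\dx}$ is compact (continuous image of $K$), and $F=\cE_{\cY}\circ\cG\circ\cD_{\cX}$ is continuous on it as a composition of the continuous maps $\cD_{\cX}$, $\cG$ (Lipschitz), and $\cE_{\cY}$. The universal approximation property of $\cR$ then furnishes a network with $\sup_{\alpha\in\cE(K)}|F(\alpha)-\cR(\alpha)|<\eps/3$. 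Summing the three $\eps/3$ bounds and taking the supremum over $a\in K$ gives the claim.

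The step I expect to be the main obstacle is making the two spectral truncations uniform over $K$ rather than merely pointwise. Plain eigenfunction convergence $P_{\cX}a\to a$ and $P_{\cY}\cG(a)\to\cG(a)$ holds for each function separately, but the theorem needs a single pair $\dx,\dy$ that works simultaneously for every $a\in K$. The clean route is the compactness/equi-small-tails lemma: the partial-sum projections converge strongly to the identity and, being uniformly bounded, converge uniformly on compact sets; this is exactly where compactness of $K$ (and of $\cG(K)$, which needs only continuity of $\cG$) is used, and I would isolate it as a lemma. A secondary point to state carefully is the isometry of $\cD$ together with the boundedness of $\cE$, which lets the $L^2$ decoder error collapse exactly to the Euclidean error that the universal approximation theorem controls; one should also remark that $\cA,\cU$ being $L^2$ with $\{\phi_{\cX,i}\}$ and $\{\phi_{\cY,i}\}$ complete orthonormal systems is precisely what guarantees the tails vanish in the limit.
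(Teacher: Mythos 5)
Your proposal is correct and follows essentially the same route as the paper: the paper's proof likewise splits the error into the decoding (output truncation) error, the encoding (input truncation) error controlled via the Lipschitz constant of $\cG$, and the finite-dimensional approximation error handled by the universal approximation property of $\cR$ on the compact set $\cE(K)$. Your explicit three-term decomposition, the equi-small-tails lemma for compact sets, and the isometry of $\cD$ are precisely the ingredients the paper combines, so there is nothing substantively different to flag.
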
 

\begin{proof}[Proof of Theorem]
It is challenging to directly prove the approximation error from $\cA \rightarrow \cU$. Therefore, we establish a low-dimensional projection subspace of $\cA$ and $\cU$ spanned by the corresponding LBO eigenfunctions. It can be first proved that $\cN$ holds universal approximation property in learning operators between the projection subspaces. Since LBO eigenfunctions is a group of basis in $L^2$ space, the projection error can be proven to be $\epsilon-$approximation. Therefore, the final approximation error of $\cN$ can be obtained by combining the approximation error on the subspace, the encoding error on the input, and the decoding error on the output. The detailed proof can be found in Supplementary Materials S3.

\end{proof}

\section{Results}\label{sec:results}
The proposed NORM was verified using three toy cases and two practical engineering cases with 2D or 3D complex geometric domains. The three toy cases of learning PDEs solution operators involved different problem settings and input/output structures: (1) The Darcy problem case aims to learn the mapping from the parameter function (the diffusion coefficient field) to the solution function (the pressure function field), where both functions are defined on the same 2D manifold; (2) The pipe turbulence case is a classical dynamics systems prediction setting, namely, predicting the future state field based on the current state field (velocity field in the pipe), and (3) The heat transfer case tries to learn the mapping from the boundary condition (temperature function on 2D manifold) to the temperature field of the part (temperature function on 3D manifold). The two engineering cases are composite workpiece deformation prediction and blood flow dynamics prediction: (4) The composite case aims to learn the mapping from the temperature field to the final deformation field of a 3D composite workpiece, where the deformation mechanism involves complex physicochemical processes other than only PDEs, and (5) For the blood flow dynamics case, the inputs are multiple time series functions, and the output is the spatiotemporal velocity field of the aortic (3D manifold).

We compared the NORM with several popular neural operators, including DeepONet \cite{lu2021learning}, POD-DeepONet \cite{lu2022comprehensive}, FNO \cite{li2020fourier}, and also one classical Graph Neural Networks (GNN), named GraphSAGE \cite{hamilton2017inductive}. For the 2D cases, the irregular geometric domains were interpolated to a regular domain for the implementation of FNO. For the 3D cases, we did not compare with FNO because of the prohibitive complexity of 3D spatial interpolation. Since the message-passing mechanism in graph learning methods typically focuses on problems with the same input and output graphs, we did not compare GNN for the heat transfer case and blood flow dynamics case. The details about data generation and baseline model configurations are described in the Supplementary Materials S4 and S5. The quantitative comparison results of all methods are presented in Table. \ref{tab:Composite_result}. We considered two error metrics: $E_{L_2}$ is the mean relative $L_2$ error of all test samples, and Mean Maximum Error (MME) refers to the mean value of all test samples in terms of the maximum error in the whole computational domain. 

\begin{table}[t]%
\scriptsize
\vspace*{-2pt}
\centering
\caption{Performance comparison for the five case studies.}\label{tab2}%
\renewcommand\arraystretch{2}
\begin{tabular*}{0.98\textwidth}{llccccc}
\toprule
\textbf{Case} & \textbf{Metric} & \textbf{GNN} & \textbf{DeepONet} & \textbf{POD-DeepONet} & \textbf{FNO}   & \textbf{NORM} \\
\midrule

\multirow{2}{8em}{\textbf{1.} Darcy problem\\}

& MME  & 0.140(0.002)   & 0.045(4e-4)  & 0.044(3e-4)   & 0.094(0.002) & \textbf{0.039(4e-4) } \\
& $E_{L_2}$(\%) & 6.732(0.053) & 1.358(0.013) & 1.296(0.023)  & 3.826(0.077) & \textbf{1.046(0.020) } \\
\midrule

\multirow{2}{8.5em}{\textbf{2.} Pipe turbulence\\}  
& MME  & 2.358(0.125) & 0.960(0.002) & 0.241(0.017) &  0.896(0.001)&  \textbf{0.116(0.003) } \\
& $E_{L_2}$(\%)   & 23.583(1.411) & 9.358(0.107) & 2.587(0.275)&  3.801(0.002)& \textbf{1.008(0.020) } \\
\midrule

\multirow{2}{8em}{\textbf{3.} Heat transfer\\}  
& MME  & - & 3.038(0.156)   & \textbf{1.304(0.045)} & -  & 1.599(0.096) \\
& $E_{L_2}$(\%)  & - & 0.072(0.002) & 0.057(0.001)   & - & \textbf{0.027(0.002) } \\
\midrule

\multirow{2}{8em}{\textbf{4.} Composite\\}  

& MME   & 0.882(0.029)   & 0.157(0.002) & 0.077(0.003)& - & \textbf{0.051(0.002) } \\
& $E_{L_2}$(\%)   & 20.908(0.050)  & 1.880(0.034) & 1.437(0.060)&  - & \textbf{0.999(0.027)} \\
\midrule

\multirow{2}{8em}{\textbf{5.} Blood flow\\}    
& MME   &-   & 0.899(0.010) & 0.488(0.002) &  - & \textbf{0.093(0.003)}  \\
& $E_{L_2}$(\%)   & -  & 89.260(2.367) & 37.420(0.119) &  - & \textbf{4.822(0.061)}  \\

\bottomrule
\multicolumn{7}{l}{\scriptsize The values A(B) represent the mean and standard deviation of five repeated runs, respectively.} \\
\end{tabular*}\vspace*{-3pt} 
\label{tab:Composite_result}
\end{table}

\subsection{Learning PDEs solution operators}

\begin{figure}[t]
\centering
\includegraphics[width=\textwidth]{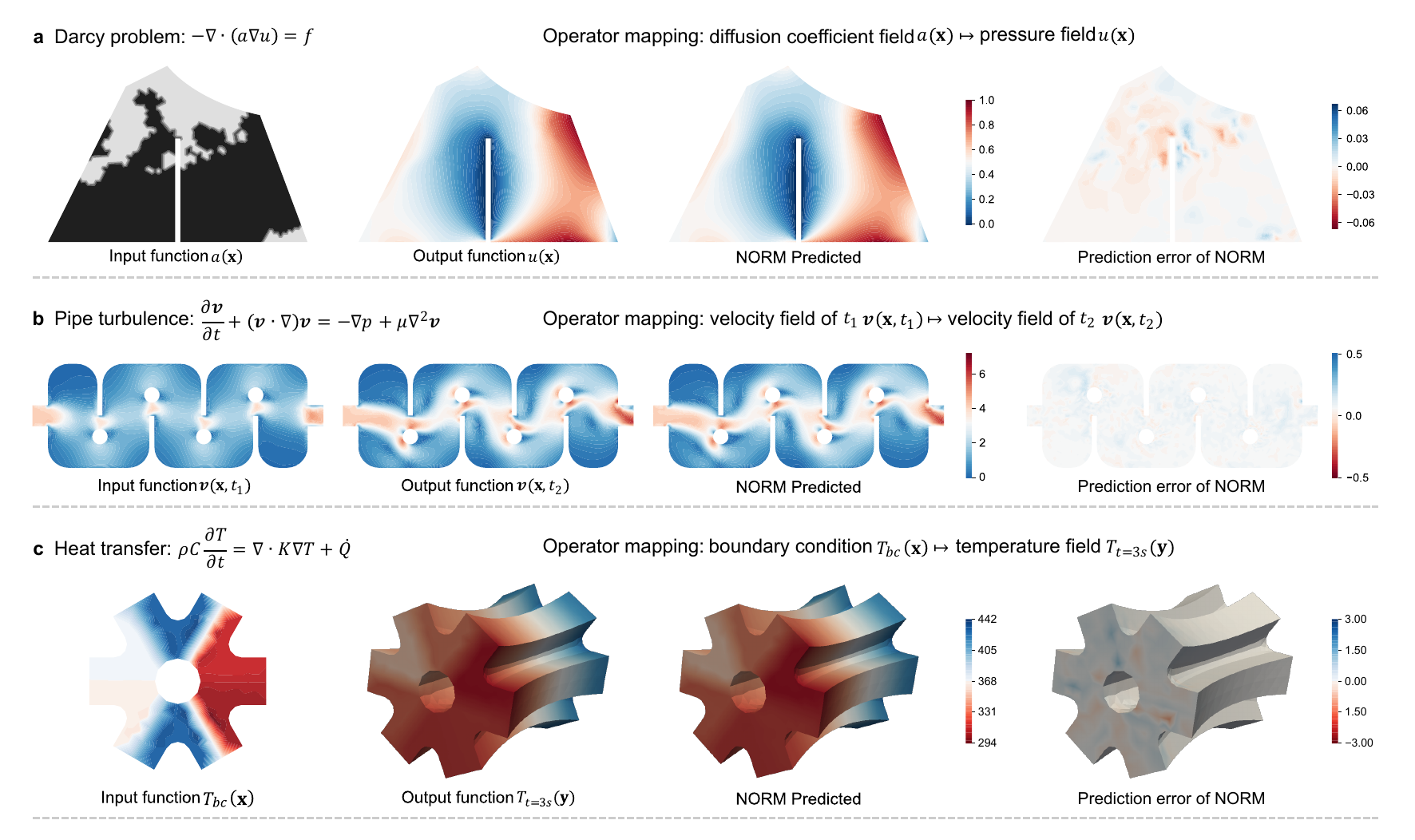}
\caption{\textbf{Illustration of three toy case studies}. \textbf{a}, Darcy problem (Case 1): the operator learning problem is the mapping from the diffusion coefficient field to the pressure field. \textbf{b}, Pipe turbulence (Case 2): the operator learning problem is the mapping from the current velocity field to the future velocity field. \textbf{c}, Heat transfer (Case 3): the operator learning problem is the mapping from the 2D boundary condition to the 3D temperature field of the part.}
\label{fig_three_cases}
\end{figure}

\subsubsection{Darcy problem (Case 1)}
Darcy flow equation is a classical law for describing the flow of a fluid through a porous medium. This problem is also widely used for various neural operator verification \cite{kovachki2021neural}. Darcy's law can be mathematically described by the following equation:

\begin{equation}
\label{eq_darcy}
-\nabla \cdot(a \nabla u )=f
\end{equation}
where $a$ is the diffusion coefficient field, $u$ is the pressure field and $f$ is the source term to be specified. As shown in Fig. \ref{fig_three_cases}a, the computational domain is a 2D geometric shape represented by a triangle mesh with 2290 nodes. The geometric domain has an irregular boundary with a thin rectangle notch inside, which can increase the complexity of the learning problem. The operator learning target in the Darcy flow problem is the mapping from the diffusion coefficient field $a(\mathbf{x})$ to the pressure field $u(\mathbf{x})$:

\begin{equation}
\cG: a(\mathbf{x}) \mapsto u(\mathbf{x}), \quad \mathbf{x} \in \cM
\end{equation}

The labelled data for training the neural operator model is the pair of $a(\mathbf{x})$ and $u(\mathbf{x})$. 1200 sets of input data $a(\mathbf{x})$ are randomly generated first. Then the corresponding $u(\mathbf{x})$ is solved by Matlab's SOLVEPDE toolbox. 1000 of them are used as the training dataset, and the rest 200 groups are defined as the test dataset. 

Fig. \ref{fig_three_cases}a reports the comparative prediction results for one representative in the test dataset. It can be observed that the output field and the NORM predicted result show excellent agreement. The prediction results and errors of comparison methods are provided in the Extended Fig. \ref{darcy_Node2k_2}, in which $\Delta_{mean}$ refers to the average absolute error over all nodes in the geometric domain, and $\Delta_{ max}$ means the maximum absolute error on all nodes. Due to inaccurate grid interpolation, FNO has the most significant error, especially in the boundary region. DeepONet and POD-DeepONet show significant errors on the right side of the rectangle. The quantitative results on the test dataset are listed in Table. \ref{tab:Composite_result}. NORM can achieve the lowest error compared with all other baseline methods. 

\subsubsection{Pipe turbulence (Case 2) }
Turbulence is a vital flow state of the fluid, which reflects the instability of the fluid system \cite{rouse1937modern}. Here, we considered turbulent flows in a complex pipe, of which the underlying governing law is the 2D Navier-Stokes equation for a viscous incompressible fluid:
\begin{equation} \label{eq_pipe}
\begin{aligned}
 \frac{\partial \boldsymbol{v}}{\partial t}+(\boldsymbol{v} \cdot \nabla) \boldsymbol{v} & =-\nabla p+\mu \nabla^2 \boldsymbol{v} \\
\nabla \cdot \boldsymbol{v} &=0 
\end{aligned}
\end{equation}
where $\boldsymbol{v}$ is the velocity, $p$ is the pressure, and the fluid chosen is water. The geometric design of the irregular pipe is shown in Fig. \ref{fig_three_cases}b, where the left and right ends are inlet and outlet, respectively. For a given inlet velocity, we performed the transient simulation to predict the velocity distribution in the pipe. The velocity field data are represented by a triangular mesh with 2673 nodes. Details about data generation and simulation settings can be found in the Supplementary Materials S4.1.2. The operator learning problem of this case is defined as the mapping from the velocity field $\boldsymbol{v}(\mathbf{x}, t_1)$ to the velocity field $\boldsymbol{v}(\mathbf{x}, t_2)$, where $t_2 = t_1+0.1 s$:
\begin{equation}
\cG: \boldsymbol{v}(\mathbf{x}, t_1) \mapsto \boldsymbol{v}(\mathbf{x}, t_2) , \quad \mathbf{x} \in \cM
\end{equation}

The considered baseline methods are the same as Darcy problem. Fig. \ref{fig_three_cases}b shows the predictive performance of NORM on one representative in the test dataset, which gives consistent prediction compared with the ground truth. The prediction results and errors of baseline models are provided in the Extended Fig. \ref{pipe_flow_50_temp}. FNO achieves minor errors in smooth areas but large errors in sharp areas because of the grid interpolation, leading to small $\Delta_{mean}$ but large $\Delta_{ max}$. POD-DeepONet, like NORM, has a uniform distribution of errors, while the error value is slightly larger than NORM. DeepONet has the most significant prediction error compared to other methods in this task. The quantitative statistical results can be seen in Table \ref{tab2}.

\subsubsection{Heat transfer (Case 3)}

Heat transfer describes the transfer of energy as a result of a temperature difference, which widely occurs in nature and engineering technology \cite{li2021transforming}. The heat equation can be written in the following form (assuming no mass transfer or radiation).

\begin{equation}
\label{heat_trans}
\rho C \frac{\partial T}{\partial t}= \nabla \cdot K\nabla T+\dot{Q}
\end{equation}
where $T$ is the temperature as a function of time and space. $\rho$, $C$, and $K$ are the density, specific heat capacity, and thermal conductivity of the medium, respectively. And $\dot{Q}$ is the internal heat source.

The heat transfer case was designed on a three-dimensional solid part, as shown in Fig. \ref{fig_three_cases}c. The learning problem is defined as the mapping from the 2D boundary condition $T_{bc}(\mathbf{x})$ to the 3D temperature field $T_{t=3s} (\mathbf{y})$ of the solid part after 3s of heat transfer.

\begin{equation}
\cG: T_{bc}(\mathbf{x}) \mapsto T_{t=3s}(\mathbf{y}) , \quad \mathbf{x} \in \cX, \mathbf{y} \in \cY, 
\end{equation}

As shown in Fig. \ref{fig_three_cases}c, the input geometric domain is represented by a triangular mesh with 186 nodes, and the output geometric domain is represented by a tetrahedral mesh with 7199 nodes. The labelled dataset was generated by the commercial simulation software Comsol. The training dataset consists of 100 labelled samples, and another 100 groups are defined as the test dataset. More details are given in the Supplementary Materials S4.1.3.

In this case, the input and output functions are defined on different manifolds, thus the different L-layers of NORM have to utilise different LBO eigenfunctions. The model structure of NORM is given in Fig. S1b. The beginning L-layers of NORM employ the LBO eigenfunctions of the input manifold for both the encoder and decoder. One middle L-layer of NORM utilises the LBO eigenfunctions of the input manifold for the encoder while taking the LBO eigenfunctions of the output manifold for the decoder. The ending L-layers employ LBO eigenfunctions of the output manifold for both the encoder and decoder. FNO is not implemented for this case due to the prohibitive computational complexity of 3D spatial interpolation. The prediction results of different methods for one typical test data are shown in the Extended Fig. \ref{heat_transfer_all}. DeepONet has a large prediction error where the temperature gradient is large. POD-DeepONet has different errors on different temperature regions of the left end face, while the error of NORM is smaller and only appears in a few small areas. Moreover, the statistical results for all methods on the test dataset are shown in Table \ref{tab:Composite_result}, where NORM shows the smallest relative $L_2$ error.

\subsection{Composite workpiece deformation prediction (Case 4)}

This case study investigated the effectiveness of the proposed NORM on a complex 3D irregular geometry, specifically in predicting the curing deformation of a Carbon Fiber Reinforced Polymer (CFRP) composite part. CFRP composites are known for their lightweight and high-strength properties, thus becoming preferred materials for weight reduction and performance enhancement in modern aerospace industries \cite{ramezankhani2021making}. The large size and high accuracy requirements of aerospace CFRP composite parts impose increased demands on deformation control during the curing process \cite{shen2022self}. Regulating the curing temperature distribution of a part is an effective means of controlling curing deformation. Therefore, constructing the predictive model of the temperature-to-deformation field on the geometry can provide essential support for further curing process optimisation \cite{struzziero2019numerical}. 

\begin{figure}[h]
\centering
\includegraphics[width=1\textwidth]{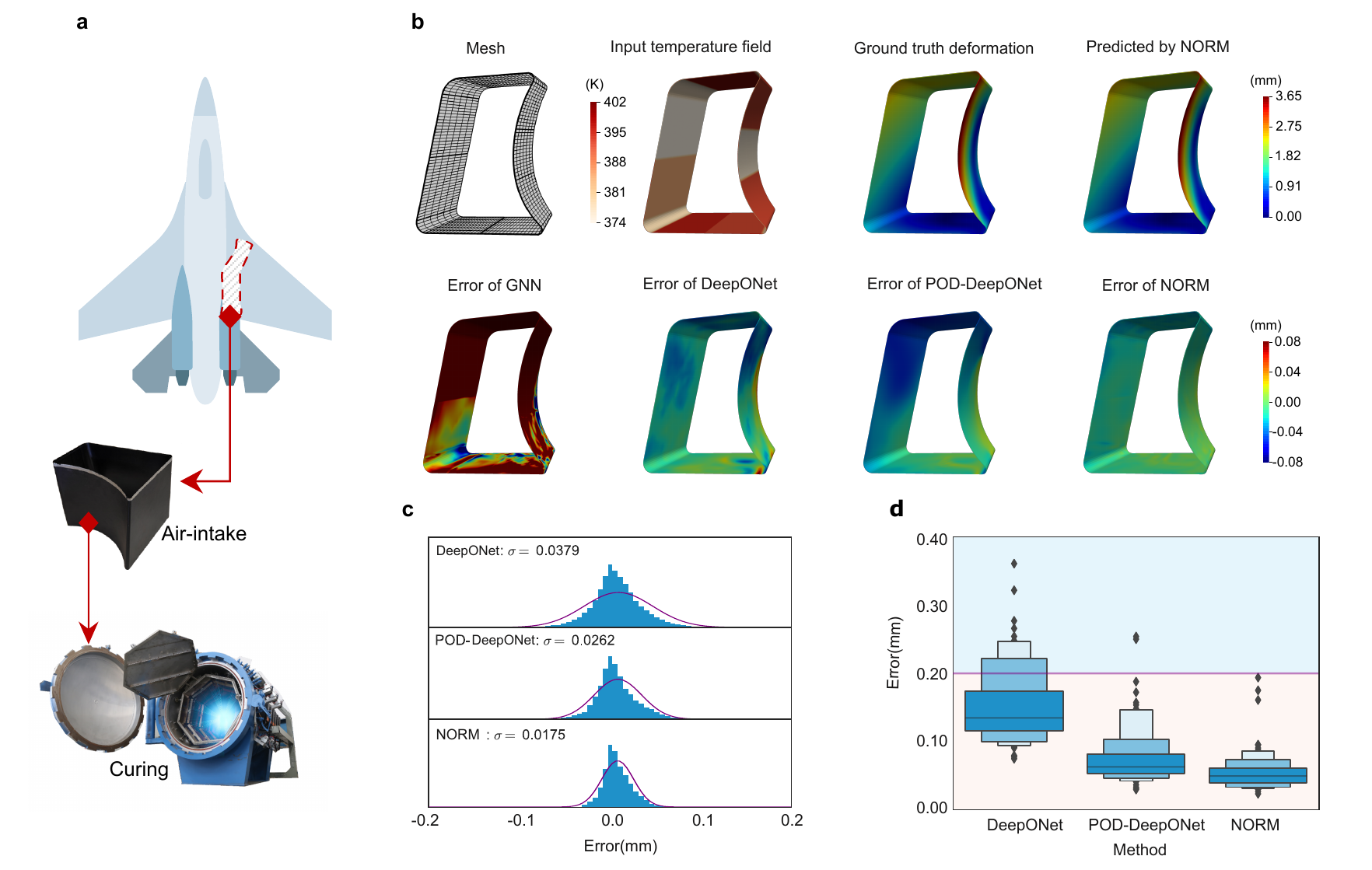}
\caption{\textbf{Composite workpiece deformation prediction case (Case 4).} \textbf{a}, Illustration of the air-intake workpiece and the composite curing. \textbf{b}, The input and output of the operator learning problem, the predicted deformation of NORM, and the prediction error of comparison methods. \textbf{c}, The distribution of deformation prediction error over all nodes of all test samples. \textbf{d}, The maximum prediction errors of all test cases for the three methods. }
\label{fig_composite_v2}
\end{figure}

As shown in Fig. \ref{fig_composite_v2}a, the CFRP composite workpiece used for the case study is the air-intake structural part of a jet. This workpiece is a complex closed revolving structure formed by multiple curved surfaces, which would deform significantly after high-temperature curing. The learning problem of this case is defined as the mapping from the temperature field $a(x,y,z)$ to the deformation field $u(x,y,z)$ on the given composite part. 

Fig. \ref{fig_composite_v2}b shows the prediction result of NORM and the prediction error of baseline methods of one test sample. It can be found that the error map of NORM is almost 'green' for the whole part, which means that predicted deformation field is very close to the reference value. Table. \ref{tab:Composite_result} shows that NORM outperforms all baseline methods in both $E_{L_2}$ and MME. Fig. \ref{fig_composite_v2}c shows the distribution of prediction error over all nodes of all test samples. It can be seen that the prediction errors of all nodes for all methods show Gaussian distributions with mean values approximating zero. The estimated standard deviations of different methods are marked in each figure. By comparison, NORM can achieve a lower prediction error uniformly and comprehensively for most nodes.

Composite manufacturing is a risk-sensitive problem, so it is not sufficient to consider only the relative $L_2$ error and average statistical results. According to the deformation prediction evaluation criteria provided by the engineers of the collaborating company, the maximum prediction error of the deformation field predicted by the data-driven model should be less than 0.2mm. Fig. \ref{fig_composite_v2}d reports the maximum prediction errors of all test cases. NORM is not only far outperforming the comparative methods but also has all test samples with a maximum prediction error of less than 0.2mm. 

\subsection{Blood flow dynamics prediction (Case 5)}

\begin{figure}[t]
\centering
\includegraphics[width=\textwidth]{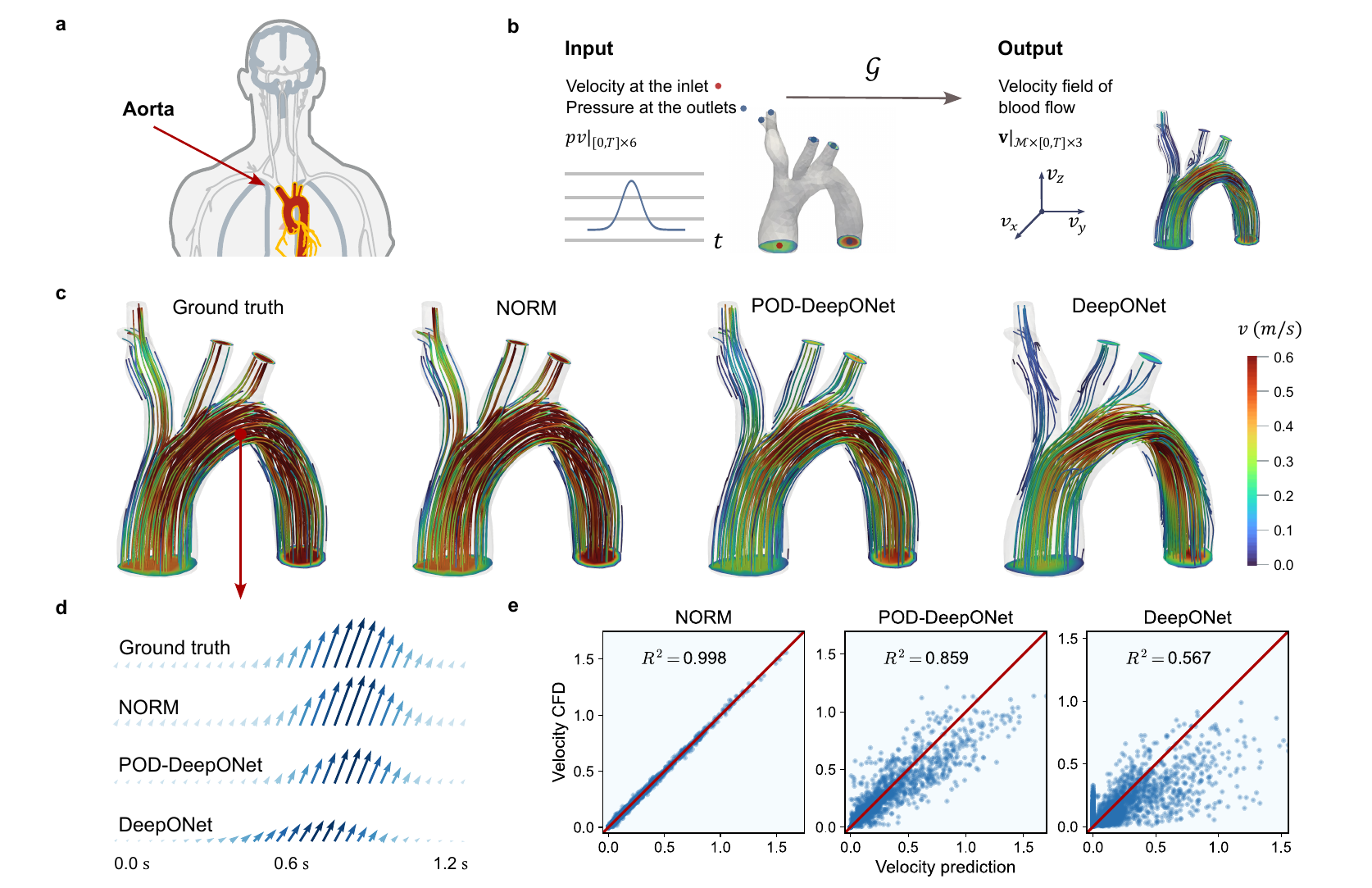}
\caption{\textbf{Blood flow dynamics prediction case (Case 5).} \textbf{a}, Illustration of the human thoracic aorta, the largest human artery. \textbf{b}, Illustration of the operator mapping $\cG$. The inputs are the velocity at the inlet and the pressure at the outlets. The output is the velocity field of the blood flow. \textbf{c}, Visualisation of the velocity streamlines (snapshots at a representative time) against baseline methods. \textbf{d}, Comparison of node velocity evolution prediction over time. We project the 3D vector onto the xy-plane. \textbf{e}, Comparison between ground truth and predictions for the magnitude of the velocity vector. We randomly sample 5000 spatiotemporal nodes from all test samples.}
\label{fig_blood_flow_result}
\end{figure}

Blood flow dynamics is the science of studying the characteristics and regularities of blood movement and its constituents in the organism, which is closely related to human health \cite{secomb2016hemodynamics}. To explore the potential of NORM for aortic hemodynamic modelling (Fig. \ref{fig_blood_flow_result}a), we consider a similar scenario as described in reference \cite{maul2023transient} where the inputs $\left.p v\right|_{[0, T] \times 6}$ are time-varying pressure and velocity at the inlet/outlets, and the output $\left.\mathbf{v}\right|_{\mathcal{M} \times[0, T] \times 3}$ is the velocity field of blood flow consisting of velocity components in three directions \cite{wen2010investigation}, as shown in Fig. \ref{fig_blood_flow_result}b. The spatial domain is represented by a tetrahedral mesh with 1656 nodes, and the temporal domain is discrete with 121 temporal nodes. It is worth pointing out that the challenges of this case lie in two aspects: 1) time-space complexity, i.e. the output function defined on the complex geometric domain is time-varying; 2) unbalanced node values, i.e. the velocity of most nodes is close to zero due to the no-slip boundary condition.

Since the Fourier basis is also a group of the LBO eigenfunction, NORM can naturally deal with the temporal dimension of input and output functions using the Fourier basis, as discussed in Supplementary Materials S1.2. Hence NORM adopted the structure of Fig. S1c. Statistics results of the NORM and two benchmarks (DeepONet and POD-DeepONet) are presented in Table \ref{tab2}. It is evident that NORM yields the smallest MME and relative $L_2$ error with minor variation. It stands to reason that at nodes $v \rightarrow 0$, even a slight prediction bias would lead to a significant relative $L_2$ error, but the proposed NORM achieves an impressive relative $L_2$ error $4.822 \%$, compared with 89.26\% of DeepONet and 37.42\% of POD-DeepONet, which demonstrates the remarkable approximation capability of NORM. Fig. \ref{fig_blood_flow_result}c compares the visualisation of the velocity streamlines (snapshots at a representative time) against baseline methods. We observe that NORM achieves an excellent agreement with the corresponding ground truth, while POD-DeepONet and DeepONet only learn the general trend of velocity distribution but lose the predictive accuracy of the node value. Especially, DeepONet fails to capture the local details of streamlines at inlets and outlets. Additional comparison visualisations of other moments can be found in Supplementary Materials S6.

Furthermore, Fig. \ref{fig_blood_flow_result}d provides the perspective to investigate the predictive accuracy of the node velocity evolution over time, which projects the three-dimensional vector onto the xy-plane. NORM agrees well with ground truth regarding phase and amplitude, while POD-DeepONet shows a smaller overall amplitude, and DeepONet loses accuracy in both aspects. Finally, the comparison between ground truth and predictions for the magnitude of the velocity vector at 5000 spatiotemporal nodes randomly sampled from all test samples is plotted in Fig. \ref{fig_blood_flow_result}e. Compared to NORM ($R^2=0.998$), despite a quasi-linear relationship maintained by POD-DeepONet ($R^2=0.859$), a prediction bias amplifies as the velocity increases. We conjecture that it is the approximation bias introduced by using the linear superposition method to fit complex nonlinear problems. As for DeepONet ($R^2=0.567$), since its training mode is point-wise and the loss function used for training is the relative $L_2$ error, the updating of the model parameters is mainly driven by the nodes $v \rightarrow 0$. Then, the model outputs tend to be zero, resulting in a trade-off with the optimisation of other nodes. Therefore, the overall prediction of nodes in DeepONet appears more dispersed and does not show a linear relationship.

\subsection{Analysis}

The encoder and the decoder of NORM are constructed by the spectral decomposition and the spectral reconstruction on the corresponding LBO eigenfunctions. This prompts a natural question: Could there be a more suitable basis than LBO eigenfunctions? From a model reduction point of view, the Proper Orthogonal Decomposition (POD) could also provide the projection basis to construct the encoder and the decoder. Consequently, NORM could be naturally extended to POD-NORM, wherein the POD modes of the training dataset replace the LBO eigenfunctions. Note that, the input data and the output data have different POD modes, so the structure of POD-NORM is similar to NORM with different input and output manifolds (Fig. S1b in the Supplementary Materials). Therefore, NORM and POD-NORM were compared to demonstrate the advantages of LBO eigenfunctions. We focus on the performance comparison of the Darcy problem and the composite case, because the input fields of these two tasks are more complex, which brings more challenge to the representability of the spectrum. The data results reported in Fig. \ref{fig_analysis} are the average based on five repeated runs.

\begin{figure}[t]
\centering
\includegraphics[width=\textwidth]{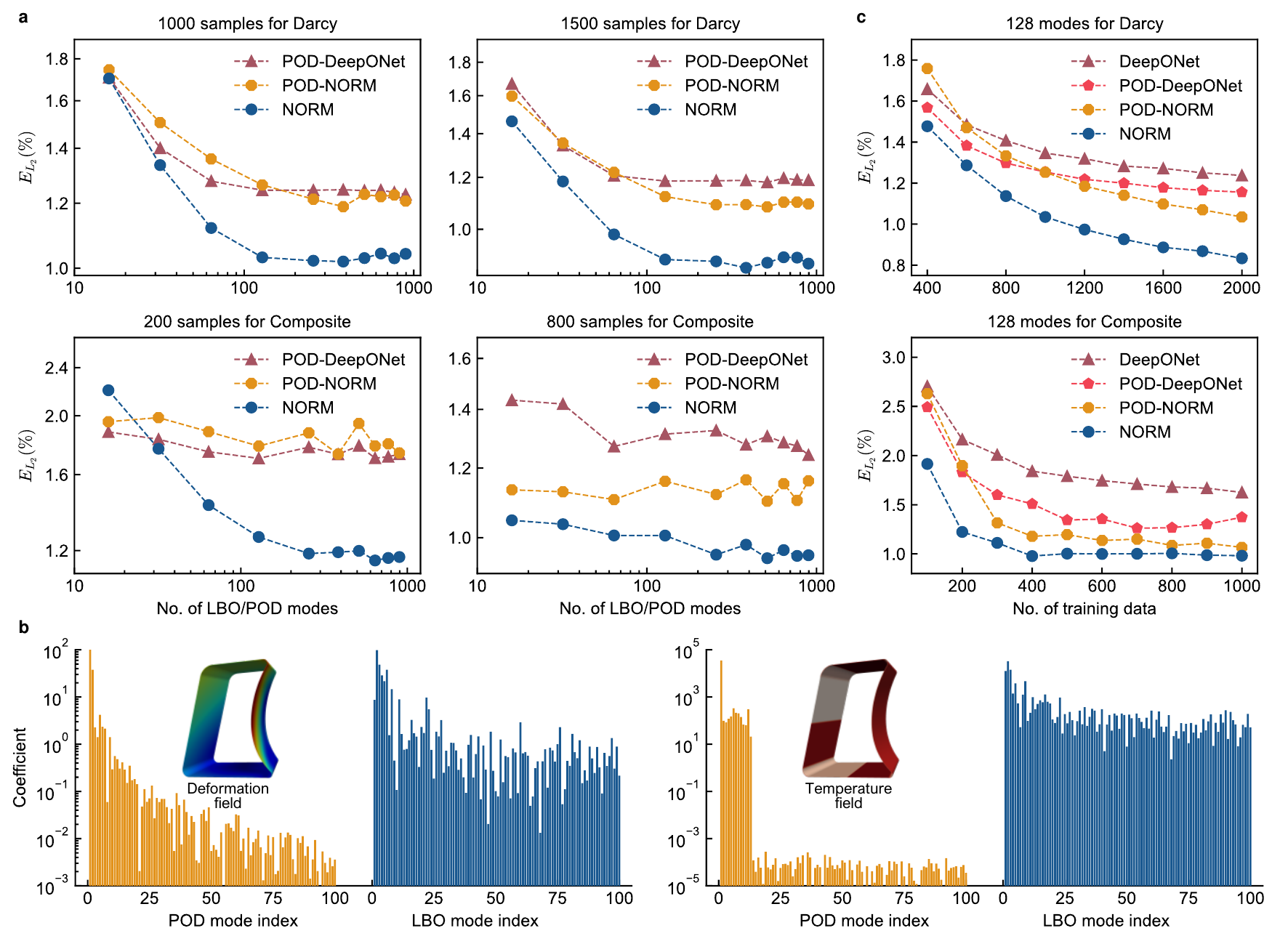}
\caption{\textbf{Analysis results for different methods in the Darcy case and the composite case.} \textbf{a}, Comparison of POD-DeepONet, POD-NORM, and NORM for various numbers of modes in different sizes of the training dataset. \textbf{b}, The coefficient analysis of the spectral decomposition of both the input temperature field and the output deformation field for the composite case using LBO and POD modes. \textbf{c}, Comparison of DeepONet, POD-DeepONet, POD-NORM, and NORM for different size of training data while the number of LBO/POD modes is 128.}
\label{fig_analysis}
\end{figure}

We first compared the performance of NORM, POD-NORM, and POD-DeepONet across various numbers of modes from 16 to 896. Fig. \ref{fig_analysis}a shows the error tendency of the different methods with different mode numbers. Each case contains the results with two different sizes of the training dataset, $\{1000, 1500\}$ for the Darcy case and $\{200, 800\}$ for the composite case. For the Darcy case, the prediction errors of all three methods decrease rapidly as the number of modes increases, eventually converging to a stable performance level. Notably, POD-NORM and POD-DeepONet have similar performance, and NORM shows smaller errors under all number of modes. These findings indicate that the LBO eigenfunctions possess a more robust representation capability compared to POD modes. In Fig. \ref{fig_analysis}a, we can observe that, in the composite case, increasing the number of POD modes does not appear to reduce the prediction error of POD-DeepONet and POD-NORM significantly. In contrast, NORM continues to show a clear decreasing trend in error while maintaining its leading performance.

To further explain the performance difference between the two modes in the composite case, we conducted a comparative analysis of the spectral decomposition of both the input temperature field and the output deformation field using LBO and POD modes. As shown in Fig. \ref{fig_analysis}b, the top 100 POD decomposition coefficients of the deformation field decreases rapidly from magnitudes of $10^2$ to $10^{-3}$, and the decomposition coefficients of the temperature field drop from $10^1$ to $10^{-4}$ suddenly. That indicates that the feature representation after the encoder $\cE$ contains coefficients spanning a wide range, from $10^{-4}$ to $10^2$, which could bring challenges for the learning process. Besides, since the high-order POD coefficients of the deformation field are extremely small, any errors in these coefficients could lead to significant sensitivity in the reconstructed results generated by the decoder $\cD$. By comparison, the LBO decomposition coefficients fluctuate in a relatively smaller range. This observation provides a potential explanation for why NORM consistently outperforms POD-NORM in most scenarios.

Another key distinction between these two modes lies in their underlying principles. POD modes are learnt from data, making their accuracy and generalisability heavily dependent on the size of training data. In contrast, LBO eigenfunctions are entirely independent of the training data. Therefore, we further compare the error tendency for different operator learning methods with respect to the training dataset size. For the Darcy problem, the training dataset sizes vary from 400 to 2000, and the test dataset is an additional 200 groups labelled data. For the composite case, the training dataset sizes are set from 100 to 1000, and another 100 groups are defined as the test dataset. The number of modes is consistently set to 128 for POD-DeepONet, POD-NORM, and NORM. The results are presented in Fig. \ref{fig_analysis}c. Notably, we observe that NORM exhibits a more rapid convergence rate as the training dataset increases, outperforming the other methods. In particular, for the Darcy problem, a NORM with 1200 samples can achieve a relative L2 error of less than 1.00\%, while DeepONet, POD-DeepONet, and POD-NORM with 2000 samples are 1.04\%, 1.24\% and 1.16\% respectively. To sum up, integrating LBO eigenfunctions enables NORM with superior performance bound and enhances the convergence capability. 

\section{Discussion}\label{sec:Discussion}

In this research, we propose a deep learning framework with a new concept called the Neural Operator on Riemannian Manifolds (NORM), to learn mappings between functions defined on complex geometries, which are common challenges in science discovery and engineering applications. Unlike existing neural operator methods (such as FNO, UNO, WNO) that rely on regular geometric domains of Euclidean structure, NORM is able to learn mappings between input and output functions defined on any Riemannian manifolds via LBO subspace approximation. Furthermore, the optimality of LBO eigenfunctions allows NORM to capture the global feature of complex geometries with only a limited number of modes, rather than directly learning the operator in the high-dimensional coordinate space. The ability of LBO eigenfunctions to approximate functions on Riemannian manifolds also guarantees the universal approximation property of NORM. 

NORM generalises the neural operator from Euclidean spaces to Riemannian manifolds, which has a wide range of potential applications, including PDEs solving, aerodynamics optimisation and other complex modelling scenarios. The case studies in parametric PDEs solving problems and engineering applications demonstrated that NORM can learn operators accurately and outperform the baseline methods. The discretisation-independent ability enables NORM with greater performance advantages compared with the coordinate spaces based model (such as DeepONet \cite{lu2021learning}) when learning more complex operators (Blood flow dynamics case). The architecture of NORM draws inspiration from the iterative kernel integration structure employed in FNO \cite{li2020fourier}. Notably, since the Fourier basis is also a group of the LBO eigenfunction, NORM can be treated as a generalisation of FNO from the Euclidean space to Riemannian manifolds. In addition, NORM can deal with different input/output manifolds, including Euclidean space or complex geometries, and thus has broader application potential compared with GNN or FNO, which requires the input and output to be the same domains. 

Although NORM shows promising performance in learning operators on complex geometries, the integration of LBO eigenfunctions also restricts the geometries to be Riemannian manifolds, which means that NORM could not deal with non-Riemannian geometries or even non-manifold geometries. For non-Riemannian geometries such as 3D point clouds, one feasible solution could be manually constructing the Riemannian metric $g$ from the point cloud and then calculating LBO eigenfunctions like described in reference \cite{yan2023spectral}. Recent researchers have also started to develop Laplacian for non-manifold triangle meshes, which could be a potential solution for operator learning in non-manifold geometries \cite{sharp2020laplacian}. 

Our method offers a new perspective for learning operators and solving PDEs on manifolds. Furthermore, the Laplacian-based approximation block in our method has strong extension potential to other neural operator structures or even physics-informed machine learning methods. For instance, the approximation block $\cN$ could replace the branch net of DeepONet, making the new framework discretisation independent in both input and output functions. When solving PDEs with known equations, integrating the approximation block $\cN$ into the physics-informed neural network could reduce the parameterisation complexity in coordinate spaces. In addition, the advantages of LBO eigenfunctions could be further discovered for more operator learning settings.

\section*{Acknowledgements}
This work was supported by the National Science Fund for Distinguished Young Scholars (No. 51925505), the General Program of National Natural Science Foundation of China (No. 52275491), the Major Program of the National Natural Science Foundation of China (No. 52090052), Joint Funds of the National Natural Science Foundation of China (No. U21B2081), the National Key R\&D Program of China (No. 2022YFB3402600), and New Cornerstone Science Foundation through the XPLORER PRIZE. 


\newpage

\FloatBarrier

\bibliographystyle{unsrt}

\bibliography{main}

\begin{thebibliography}{10}

\bibitem{wang2023scientific}
Hanchen Wang, Tianfan Fu, Yuanqi Du, Wenhao Gao, Kexin Huang, Ziming Liu, Payal Chandak, Shengchao Liu, Peter Van~Katwyk, Andreea Deac, et~al.
\newblock Scientific discovery in the age of artificial intelligence.
\newblock {\em Nature}, 620(7972):47--60, 2023.

\bibitem{degrave2022magnetic}
Jonas Degrave, Federico Felici, Jonas Buchli, Michael Neunert, Brendan Tracey, Francesco Carpanese, Timo Ewalds, Roland Hafner, Abbas Abdolmaleki, Diego de~Las~Casas, et~al.
\newblock Magnetic control of tokamak plasmas through deep reinforcement learning.
\newblock {\em Nature}, 602(7897):414--419, 2022.

\bibitem{li2020neural}
Zongyi Li, Nikola Kovachki, Kamyar Azizzadenesheli, Burigede Liu, Kaushik Bhattacharya, Andrew Stuart, and Anima Anandkumar.
\newblock Neural operator: Graph kernel network for partial differential equations.
\newblock {\em arXiv preprint arXiv:2003.03485}, 2020.

\bibitem{li2022fourier}
Zongyi Li, Daniel~Zhengyu Huang, Burigede Liu, and Anima Anandkumar.
\newblock Fourier neural operator with learned deformations for pdes on general geometries.
\newblock {\em arXiv preprint arXiv:2207.05209}, 2022.

\bibitem{wang2021learning}
Sifan Wang, Hanwen Wang, and Paris Perdikaris.
\newblock Learning the solution operator of parametric partial differential equations with physics-informed deeponets.
\newblock {\em Science advances}, 7(40):eabi8605, 2021.

\bibitem{chen2018neural}
Ricky~TQ Chen, Yulia Rubanova, Jesse Bettencourt, and David~K Duvenaud.
\newblock Neural ordinary differential equations.
\newblock {\em Advances in neural information processing systems}, 31, 2018.

\bibitem{brunton2023machine}
Steven~L. Brunton and J.~Nathan Kutz.
\newblock Machine learning for partial differential equations, 2023.

\bibitem{gopakumar2023fourier}
Vignesh Gopakumar, Stanislas Pamela, Lorenzo Zanisi, Zongyi Li, Anima Anandkumar, and MAST Team.
\newblock Fourier neural operator for plasma modelling.
\newblock {\em arXiv preprint arXiv:2302.06542}, 2023.

\bibitem{corti2022impact}
Mattia Corti, Alberto Zingaro, Alfio~Maria Quarteroni, et~al.
\newblock Impact of atrial fibrillation on left atrium haemodynamics: A computational fluid dynamics study.
\newblock {\em Computers in Biology and Medicine}, 150:106143, 2022.

\bibitem{kissas2020machine}
Georgios Kissas, Yibo Yang, Eileen Hwuang, Walter~R Witschey, John~A Detre, and Paris Perdikaris.
\newblock Machine learning in cardiovascular flows modeling: Predicting arterial blood pressure from non-invasive 4d flow mri data using physics-informed neural networks.
\newblock {\em Computer Methods in Applied Mechanics and Engineering}, 358:112623, 2020.

\bibitem{sabater2022fast}
Christian Sabater, Philipp St{\"u}rmer, and Philipp Bekemeyer.
\newblock Fast predictions of aircraft aerodynamics using deep-learning techniques.
\newblock {\em AIAA Journal}, 60(9):5249--5261, 2022.

\bibitem{taverniers2022accelerating}
S{\o}ren Taverniers, Svyatoslav Korneev, Kyle~M Pietrzyk, and Morad Behandish.
\newblock Accelerating part-scale simulation in liquid metal jet additive manufacturing via operator learning.
\newblock {\em arXiv preprint arXiv:2202.03665}, 2022.

\bibitem{karniadakis2021physics}
George~Em Karniadakis, Ioannis~G Kevrekidis, Lu~Lu, Paris Perdikaris, Sifan Wang, and Liu Yang.
\newblock Physics-informed machine learning.
\newblock {\em Nature Reviews Physics}, 3(6):422--440, 2021.

\bibitem{azzizadenesheli2023neural}
Kamyar Azzizadenesheli, Nikola Kovachki, Zongyi Li, Miguel Liu-Schiaffini, Jean Kossaifi, and Anima Anandkumar.
\newblock Neural operators for accelerating scientific simulations and design.
\newblock {\em arXiv preprint arXiv:2309.15325}, 2023.

\bibitem{ramezankhani2021making}
Milad Ramezankhani, Bryn Crawford, Apurva Narayan, Heinz Voggenreiter, Rudolf Seethaler, and Abbas~S Milani.
\newblock Making costly manufacturing smart with transfer learning under limited data: A case study on composites autoclave processing.
\newblock {\em Journal of Manufacturing Systems}, 59:345--354, 2021.

\bibitem{chen2019u}
Junfeng Chen, Jonathan Viquerat, and Elie Hachem.
\newblock U-net architectures for fast prediction of incompressible laminar flows.
\newblock {\em arXiv preprint arXiv:1910.13532}, 2019.

\bibitem{wu2023solving}
Haixu Wu, Tengge Hu, Huakun Luo, Jianmin Wang, and Mingsheng Long.
\newblock Solving high-dimensional pdes with latent spectral models.
\newblock {\em arXiv preprint arXiv:2301.12664}, 2023.

\bibitem{velickovic2017graph}
Petar Velickovic, Guillem Cucurull, Arantxa Casanova, Adriana Romero, Pietro Lio, Yoshua Bengio, et~al.
\newblock Graph attention networks.
\newblock {\em stat}, 1050(20):10--48550, 2017.

\bibitem{chen2021graph}
Junfeng Chen, Elie Hachem, and Jonathan Viquerat.
\newblock Graph neural networks for laminar flow prediction around random two-dimensional shapes.
\newblock {\em Physics of Fluids}, 33(12):123607, 2021.

\bibitem{lu2022comprehensive}
Lu~Lu, Xuhui Meng, Shengze Cai, Zhiping Mao, Somdatta Goswami, Zhongqiang Zhang, and George~Em Karniadakis.
\newblock A comprehensive and fair comparison of two neural operators (with practical extensions) based on fair data.
\newblock {\em Computer Methods in Applied Mechanics and Engineering}, 393:114778, 2022.

\bibitem{you2022nonlocal}
Huaiqian You, Yue Yu, Marta D'Elia, Tian Gao, and Stewart Silling.
\newblock Nonlocal kernel network (nkn): a stable and resolution-independent deep neural network.
\newblock {\em arXiv preprint arXiv:2201.02217}, 2022.

\bibitem{lu2021learning}
Lu~Lu, Pengzhan Jin, Guofei Pang, Zhongqiang Zhang, and George~Em Karniadakis.
\newblock Learning nonlinear operators via deeponet based on the universal approximation theorem of operators.
\newblock {\em Nature machine intelligence}, 3(3):218--229, 2021.

\bibitem{li2020fourier}
Zongyi Li, Nikola Kovachki, Kamyar Azizzadenesheli, Burigede Liu, Kaushik Bhattacharya, Andrew Stuart, and Anima Anandkumar.
\newblock Fourier neural operator for parametric partial differential equations.
\newblock {\em arXiv preprint arXiv:2010.08895}, 2020.

\bibitem{kovachki2021neural}
Nikola Kovachki, Zongyi Li, Burigede Liu, Kamyar Azizzadenesheli, Kaushik Bhattacharya, Andrew Stuart, and Anima Anandkumar.
\newblock Neural operator: Learning maps between function spaces.
\newblock {\em arXiv preprint arXiv:2108.08481}, 2021.

\bibitem{rahman2022u}
Md~Ashiqur Rahman, Zachary~E Ross, and Kamyar Azizzadenesheli.
\newblock U-no: U-shaped neural operators.
\newblock {\em arXiv preprint arXiv:2204.11127}, 2022.

\bibitem{gupta2021multiwavelet}
Gaurav Gupta, Xiongye Xiao, and Paul Bogdan.
\newblock Multiwavelet-based operator learning for differential equations.
\newblock {\em Advances in neural information processing systems}, 34:24048--24062, 2021.

\bibitem{li2022fourierGeo}
Zongyi Li, Daniel~Zhengyu Huang, Burigede Liu, and Anima Anandkumar.
\newblock Fourier neural operator with learned deformations for pdes on general geometries.
\newblock {\em arXiv preprint arXiv:2207.05209}, 2022.

\bibitem{gao2021phygeonet}
Han Gao, Luning Sun, and Jian-Xun Wang.
\newblock Phygeonet: Physics-informed geometry-adaptive convolutional neural networks for solving parameterized steady-state pdes on irregular domain.
\newblock {\em Journal of Computational Physics}, 428:110079, 2021.

\bibitem{seiler2015resampling}
J{\"u}rgen Seiler, Markus Jonscher, Michael Sch{\"o}berl, and Andr{\'e} Kaup.
\newblock Resampling images to a regular grid from a non-regular subset of pixel positions using frequency selective reconstruction.
\newblock {\em IEEE Transactions on Image Processing}, 24(11):4540--4555, 2015.

\bibitem{aflalo2015optimality}
Yonathan Aflalo, Haim Brezis, and Ron Kimmel.
\newblock On the optimality of shape and data representation in the spectral domain.
\newblock {\em SIAM Journal on Imaging Sciences}, 8(2):1141--1160, 2015.

\bibitem{bhattacharya2021model}
Kaushik Bhattacharya, Bamdad Hosseini, Nikola~B Kovachki, and Andrew~M Stuart.
\newblock Model reduction and neural networks for parametric pdes.
\newblock {\em The SMAI journal of computational mathematics}, 7:121--157, 2021.

\bibitem{seidman2022nomad}
Jacob~H Seidman, Georgios Kissas, Paris Perdikaris, and George~J Pappas.
\newblock Nomad: Nonlinear manifold decoders for operator learning.
\newblock {\em arXiv preprint arXiv:2206.03551}, 2022.

\bibitem{rai2021spectral}
Nischay Rai and Sabyasachi Mondal.
\newblock Spectral methods to solve nonlinear problems: A review.
\newblock {\em Partial Differential Equations in Applied Mathematics}, 4:100043, 2021.

\bibitem{patane2018laplacian}
Giuseppe Patan{\`e}.
\newblock Laplacian spectral basis functions.
\newblock {\em Computer aided geometric design}, 65:31--47, 2018.

\bibitem{aflalo2013spectral}
Yonathan Aflalo and Ron Kimmel.
\newblock Spectral multidimensional scaling.
\newblock {\em Proceedings of the National Academy of Sciences}, 110(45):18052--18057, 2013.

\bibitem{TERRY_FFT}
{Terence Tao}.
\newblock Fourier transform.
\newblock \url{https://www.math.ucla.edu/~tao/preprints/fourier.pdf}, 2016.

\bibitem{reuter2006laplace}
Martin Reuter, Franz-Erich Wolter, and Niklas Peinecke.
\newblock Laplace--beltrami spectra as ‘shape-dna’of surfaces and solids.
\newblock {\em Computer-Aided Design}, 38(4):342--366, 2006.

\bibitem{alexa2020properties}
Marc Alexa, Philipp Herholz, Maximilian Kohlbrenner, and Olga Sorkine-Hornung.
\newblock Properties of laplace operators for tetrahedral meshes.
\newblock In {\em Computer Graphics Forum}, volume~39, pages 55--68. Wiley Online Library, 2020.

\bibitem{lanthaler2022error}
Samuel Lanthaler, Siddhartha Mishra, and George~E Karniadakis.
\newblock Error estimates for deeponets: A deep learning framework in infinite dimensions.
\newblock {\em Transactions of Mathematics and Its Applications}, 6(1):tnac001, 2022.

\bibitem{hamilton2017inductive}
Will Hamilton, Zhitao Ying, and Jure Leskovec.
\newblock Inductive representation learning on large graphs.
\newblock {\em Advances in neural information processing systems}, 30, 2017.

\bibitem{rouse1937modern}
Hunter Rouse.
\newblock Modern conceptions of the mechanics of fluid turbulence.
\newblock {\em Transactions of the American Society of Civil Engineers}, 102(1):463--505, 1937.

\bibitem{li2021transforming}
Ying Li, Wei Li, Tiancheng Han, Xu~Zheng, Jiaxin Li, Baowen Li, Shanhui Fan, and Cheng-Wei Qiu.
\newblock Transforming heat transfer with thermal metamaterials and devices.
\newblock {\em Nature Reviews Materials}, 6(6):488--507, 2021.

\bibitem{shen2022self}
Yingxiang Shen, Yong Lu, Shuting Liu, Qiangqiang Liu, Shuangquan Tao, and Xiaozhong Hao.
\newblock Self-resistance electric heating of shaped cfrp laminates: temperature distribution optimization and validation.
\newblock {\em The International Journal of Advanced Manufacturing Technology}, 121(3-4):1755--1768, 2022.

\bibitem{struzziero2019numerical}
Giacomo Struzziero, Julie~JE Teuwen, and Alexandros~A Skordos.
\newblock Numerical optimisation of thermoset composites manufacturing processes: A review.
\newblock {\em Composites Part A: Applied Science and Manufacturing}, 124:105499, 2019.

\bibitem{secomb2016hemodynamics}
Timothy~W Secomb.
\newblock Hemodynamics.
\newblock {\em Comprehensive physiology}, 6(2):975, 2016.

\bibitem{maul2023transient}
Noah Maul, Katharina Zinn, Fabian Wagner, Mareike Thies, Maximilian Rohleder, Laura Pfaff, Markus Kowarschik, Annette Birkhold, and Andreas Maier.
\newblock Transient hemodynamics prediction using an efficient octree-based deep learning model.
\newblock In {\em International Conference on Information Processing in Medical Imaging}, pages 183--194. Springer, 2023.

\bibitem{wen2010investigation}
Chih-Yung Wen, An-Shik Yang, Li-Yu Tseng, and Jyh-Wen Chai.
\newblock Investigation of pulsatile flowfield in healthy thoracic aorta models.
\newblock {\em Annals of biomedical engineering}, 38:391--402, 2010.

\bibitem{yan2023spectral}
Qile Yan, Shixiao~Willing Jiang, and John Harlim.
\newblock Spectral methods for solving elliptic pdes on unknown manifolds.
\newblock {\em Journal of Computational Physics}, 486:112132, 2023.

\bibitem{sharp2020laplacian}
Nicholas Sharp and Keenan Crane.
\newblock A laplacian for nonmanifold triangle meshes.
\newblock In {\em Computer Graphics Forum}, volume~39, pages 69--80. Wiley Online Library, 2020.

\end{thebibliography}


\begin{thebibliography}{10}

\bibitem{alexa2020properties}
Marc Alexa, Philipp Herholz, Maximilian Kohlbrenner, and Olga Sorkine-Hornung.
\newblock Properties of laplace operators for tetrahedral meshes.
\newblock In {\em Computer Graphics Forum}, volume~39, pages 55--68. Wiley
  Online Library, 2020.

\bibitem{crane2018discrete}
Keenan Crane.
\newblock Discrete differential geometry: An applied introduction.
\newblock {\em Notices of the AMS, Communication}, pages 1153--1159, 2018.

\bibitem{vallet2008spectral}
Bruno Vallet and Bruno L{\'e}vy.
\newblock Spectral geometry processing with manifold harmonics.
\newblock In {\em Computer Graphics Forum}, volume~27, pages 251--260. Wiley
  Online Library, 2008.

\bibitem{neumann2014compressed}
Thomas Neumann, Kiran Varanasi, Christian Theobalt, Marcus Magnor, and Markus
  Wacker.
\newblock Compressed manifold modes for mesh processing.
\newblock In {\em Computer Graphics Forum}, volume~33, pages 35--44. Wiley
  Online Library, 2014.

\bibitem{aflalo2013spectral}
Yonathan Aflalo and Ron Kimmel.
\newblock Spectral multidimensional scaling.
\newblock {\em Proceedings of the National Academy of Sciences},
  110(45):18052--18057, 2013.

\bibitem{nica2011eigenvalues}
Mihai Nica.
\newblock Eigenvalues and eigenfunctions of the laplacian.
\newblock {\em The Waterloo Mathematics Review}, 1(2):23--34, 2011.

\bibitem{aflalo2015optimality}
Yonathan Aflalo, Haim Brezis, and Ron Kimmel.
\newblock On the optimality of shape and data representation in the spectral
  domain.
\newblock {\em SIAM Journal on Imaging Sciences}, 8(2):1141--1160, 2015.

\bibitem{kratsios2020non}
Anastasis Kratsios and Ievgen Bilokopytov.
\newblock Non-euclidean universal approximation.
\newblock {\em Advances in Neural Information Processing Systems},
  33:10635--10646, 2020.

\bibitem{kovachki2021neural}
Nikola Kovachki, Zongyi Li, Burigede Liu, Kamyar Azizzadenesheli, Kaushik
  Bhattacharya, Andrew Stuart, and Anima Anandkumar.
\newblock Neural operator: Learning maps between function spaces.
\newblock {\em arXiv preprint arXiv:2108.08481}, 2021.

\bibitem{liu2021multi}
Shuting Liu, Yingguang Li, Yingxiang Shen, and Yee~Mey Gohb.
\newblock A multi-zoned self-resistance electric heating method for curing
  irregular fiber reinforced composite parts.
\newblock In {\em Advances in transdisciplinary engineering}. IOS Press, 2021.

\bibitem{ding2015three}
Anxin Ding, Shuxin Li, Jihui Wang, and Lei Zu.
\newblock A three-dimensional thermo-viscoelastic analysis of process-induced
  residual stress in composite laminates.
\newblock {\em Composite Structures}, 129:60--69, 2015.

\bibitem{struzziero2019numerical}
Giacomo Struzziero, Julie~JE Teuwen, and Alexandros~A Skordos.
\newblock Numerical optimisation of thermoset composites manufacturing
  processes: A review.
\newblock {\em Composites Part A: Applied Science and Manufacturing},
  124:105499, 2019.

\bibitem{liu2023active}
Shuting Liu, Yingguang Li, Jianye Gan, Zijian Yang, Jing Zhou, and Xiaozhong
  Hao.
\newblock Active control of cure-induced distortion for composite parts using
  multi-zoned self-resistance electric heating method.
\newblock {\em Journal of Manufacturing Processes}, 93:47--59, 2023.

\bibitem{marsden2014optimization}
Alison~L Marsden.
\newblock Optimization in cardiovascular modeling.
\newblock {\em Annual review of fluid mechanics}, 46:519--546, 2014.

\bibitem{secomb2016hemodynamics}
Timothy~W Secomb.
\newblock Hemodynamics.
\newblock {\em Comprehensive physiology}, 6(2):975, 2016.

\bibitem{hope2010clinical}
Michael~D Hope, Alison~K Meadows, Thomas~A Hope, Karen~G Ordovas, David
  Saloner, Gautham~P Reddy, Marcus~T Alley, and Charles~B Higgins.
\newblock Clinical evaluation of aortic coarctation with 4d flow mr imaging.
\newblock {\em Journal of Magnetic Resonance Imaging: An Official Journal of
  the International Society for Magnetic Resonance in Medicine},
  31(3):711--718, 2010.

\bibitem{scotti2007compliant}
Christine~M Scotti and Ender~A Finol.
\newblock Compliant biomechanics of abdominal aortic aneurysms: a
  fluid--structure interaction study.
\newblock {\em Computers \& Structures}, 85(11-14):1097--1113, 2007.

\bibitem{shahcheraghi2002unsteady}
N~Shahcheraghi, HA~Dwyer, AY~Cheer, AI~Barakat, and T~Rutaganira.
\newblock Unsteady and three-dimensional simulation of blood flow in the human
  aortic arch.
\newblock {\em J. Biomech. Eng.}, 124(4):378--387, 2002.

\bibitem{markl20124d}
Michael Markl, Alex Frydrychowicz, Sebastian Kozerke, Mike Hope, and Oliver
  Wieben.
\newblock 4d flow mri.
\newblock {\em Journal of Magnetic Resonance Imaging}, 36(5):1015--1036, 2012.

\bibitem{caballero2013review}
Andres~D Caballero and SJCE La{\'\i}n.
\newblock A review on computational fluid dynamics modelling in human thoracic
  aorta.
\newblock {\em Cardiovascular Engineering and Technology}, 4:103--130, 2013.

\bibitem{morris2016computational}
Paul~D Morris, Andrew Narracott, Hendrik von Tengg-Kobligk, Daniel
  Alejandro~Silva Soto, Sarah Hsiao, Angela Lungu, Paul Evans, Neil~W
  Bressloff, Patricia~V Lawford, D~Rodney Hose, et~al.
\newblock Computational fluid dynamics modelling in cardiovascular medicine.
\newblock {\em Heart}, 102(1):18--28, 2016.

\bibitem{kissas2020machine}
Georgios Kissas, Yibo Yang, Eileen Hwuang, Walter~R Witschey, John~A Detre, and
  Paris Perdikaris.
\newblock Machine learning in cardiovascular flows modeling: Predicting
  arterial blood pressure from non-invasive 4d flow mri data using
  physics-informed neural networks.
\newblock {\em Computer Methods in Applied Mechanics and Engineering},
  358:112623, 2020.

\bibitem{maul2023transient}
Noah Maul, Katharina Zinn, Fabian Wagner, Mareike Thies, Maximilian Rohleder,
  Laura Pfaff, Markus Kowarschik, Annette Birkhold, and Andreas Maier.
\newblock Transient hemodynamics prediction using an efficient octree-based
  deep learning model.
\newblock In {\em International Conference on Information Processing in Medical
  Imaging}, pages 183--194. Springer, 2023.

\bibitem{wen2010investigation}
Chih-Yung Wen, An-Shik Yang, Li-Yu Tseng, and Jyh-Wen Chai.
\newblock Investigation of pulsatile flowfield in healthy thoracic aorta
  models.
\newblock {\em Annals of biomedical engineering}, 38:391--402, 2010.

\bibitem{li2020fourier}
Zongyi Li, Nikola Kovachki, Kamyar Azizzadenesheli, Burigede Liu, Kaushik
  Bhattacharya, Andrew Stuart, and Anima Anandkumar.
\newblock Fourier neural operator for parametric partial differential
  equations.
\newblock {\em arXiv preprint arXiv:2010.08895}, 2020.

\bibitem{lu2022comprehensive}
Lu~Lu, Xuhui Meng, Shengze Cai, Zhiping Mao, Somdatta Goswami, Zhongqiang
  Zhang, and George~Em Karniadakis.
\newblock A comprehensive and fair comparison of two neural operators (with
  practical extensions) based on fair data.
\newblock {\em Computer Methods in Applied Mechanics and Engineering},
  393:114778, 2022.

\bibitem{lu2021learning}
Lu~Lu, Pengzhan Jin, Guofei Pang, Zhongqiang Zhang, and George~Em Karniadakis.
\newblock Learning nonlinear operators via deeponet based on the universal
  approximation theorem of operators.
\newblock {\em Nature machine intelligence}, 3(3):218--229, 2021.

\bibitem{hamilton2017inductive}
Will Hamilton, Zhitao Ying, and Jure Leskovec.
\newblock Inductive representation learning on large graphs.
\newblock {\em Advances in neural information processing systems}, 30, 2017.

\end{thebibliography}

\setcounter{figure}{0}
\renewcommand{\figurename}{Extended Figure}

\clearpage
\textbf{Extended Figure 1}
\begin{figure}[h]
\centering
\includegraphics[width=\textwidth]{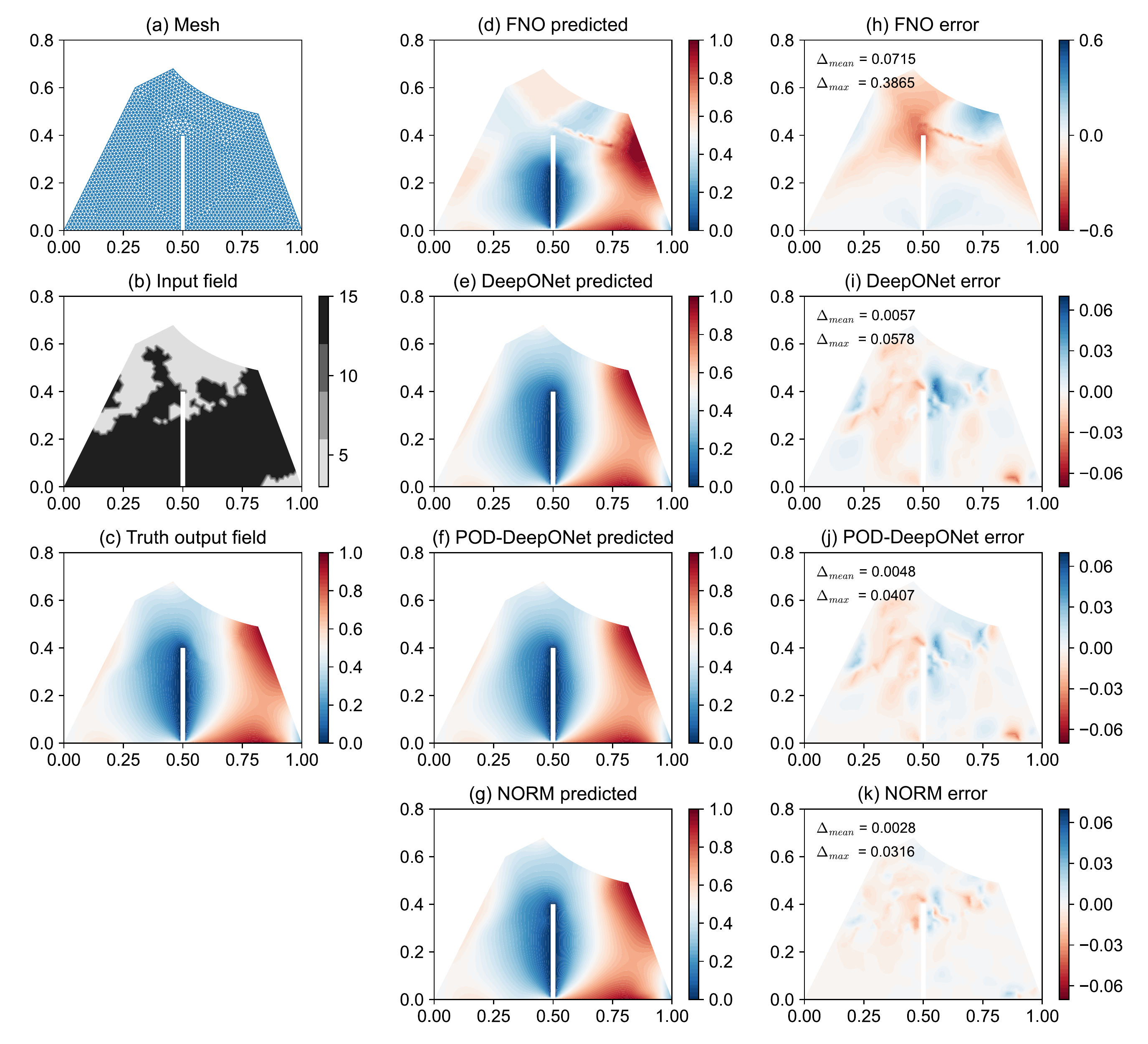}
\caption{\textbf{Experimental results of the Darcy problem.} \textbf{a}, The mesh for the irregular geometric domain. \textbf{b}, \textbf{c}, The input and output fields for a representative sample. \textbf{d-g}, The prediction results of different methods. \textbf{h-k}, The prediction errors of different methods.}
\label{darcy_Node2k_2}
\end{figure}

\clearpage
\textbf{Extended Figure 2}
\begin{figure}[h]
\centering
\includegraphics[width=\textwidth]{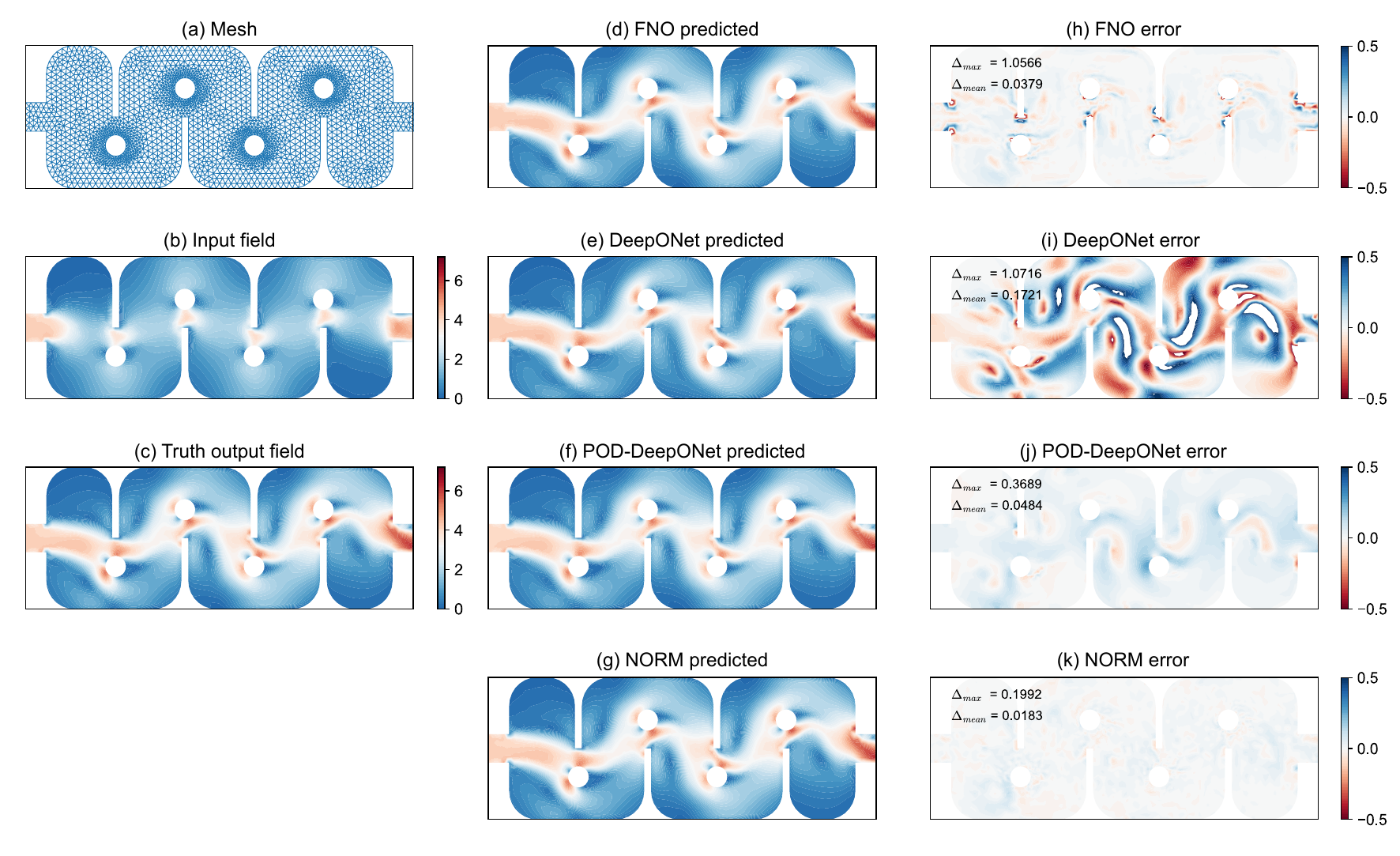}
\caption{\textbf{Experimental results of the pipe turbulence.} \textbf{a}, The mesh for the complex pipe shape. \textbf{b}, \textbf{c}, The input and output fields for a representative sample. \textbf{d-g}, The prediction results of different methods. \textbf{h-k}, The prediction errors of different methods.}
\label{pipe_flow_50_temp}
\end{figure}

\clearpage
\textbf{Extended Figure 3}
\begin{figure}[h]
\centering
\includegraphics[width=\textwidth]{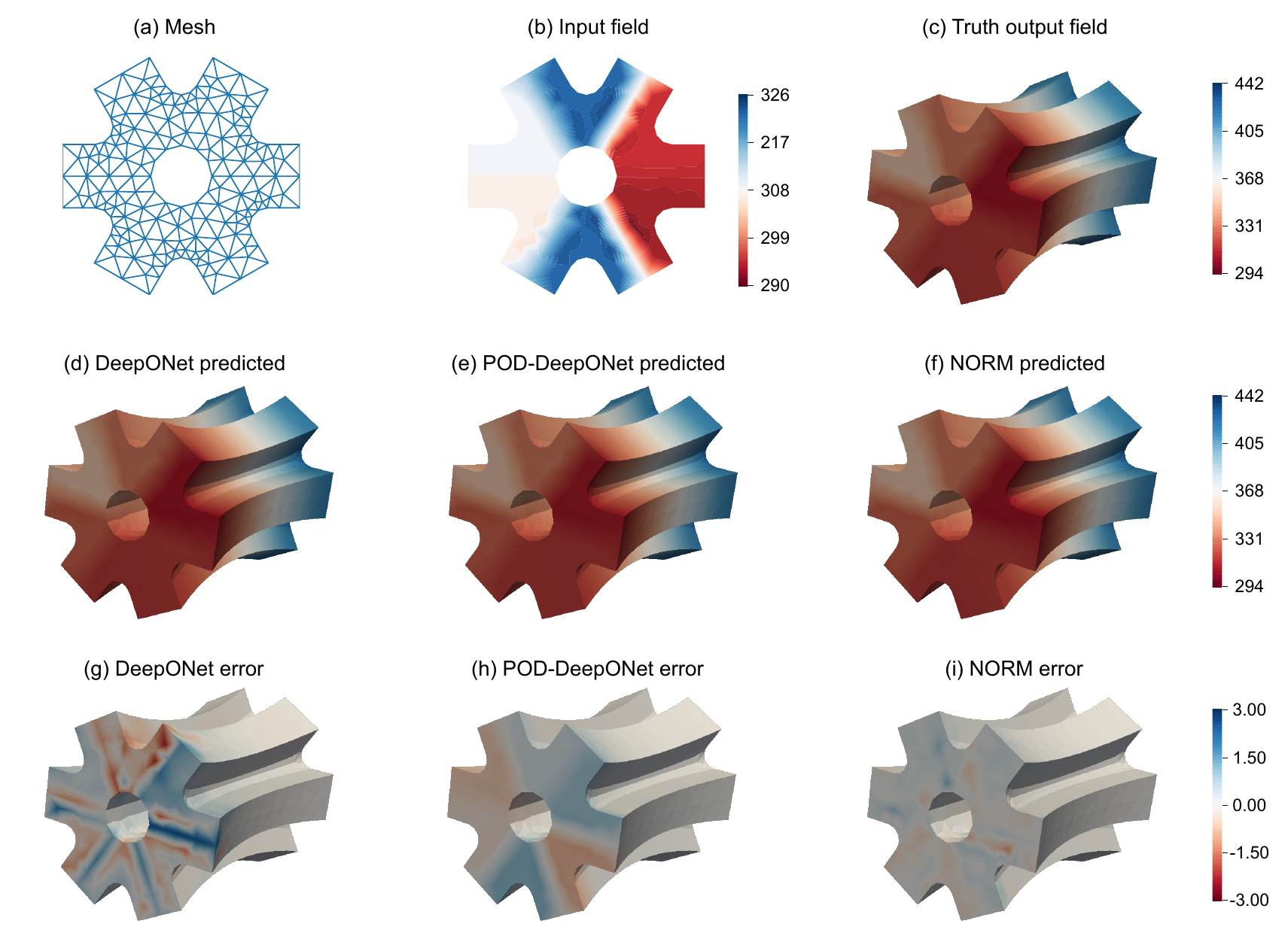}
\caption{\textbf{Experimental results of the heat transfer.} \textbf{a}, The mesh for the input geometric domain. \textbf{b}, \textbf{c}, The input and output fields for a representative sample. \textbf{d-f}, The prediction results of different methods. \textbf{g-i}, The prediction errors of different methods.}
\label{heat_transfer_all}
\end{figure}

\end{document}


\title{\textbf{Supplementary Materials for Learning Neural Operators on Riemannian Manifolds}}

\author[a,1]{Gengxiang Chen}
\author[b,1]{{Xu Liu}}
\author[a]{Qinglu Meng}
\author[a]{Lu Chen}
\author[a]{Changqing Liu}
\author[a]{{Yingguang Li}\corref{cor1}}
\address[a]{College of Mechanical and Electrical Engineering, Nanjing University of Aeronautics and Astronautics, 210016, Nanjing, China}

\address[b]{School of Mechanical and Power Engineering, Nanjing Tech University, 211816, Nanjing, China}

\cortext[cor1]{Corresponding author: liyingguang@nuaa.edu.cn}
\fntext[fn1]{Gengxiang Chen and Xu Liu contributed equally.}

\maketitle

\renewcommand\thefigure{S\arabic{figure}}
\renewcommand\thetable{S\arabic{table}}  
\renewcommand\thesection{S\arabic{section}} 

\tableofcontents

\linespread{1.3}


\clearpage
\section{Methodology}

\subsection{Discretised version of $\cN(v)$}

We first start from a common situation, assuming that the input and output functions are defined on the same manifold $\cM$, i.e. $\cX = \cY = \cM$. Suppose the manifold $\cM$ is discretised into $n_x$ nodes. For the approximation block  $\cN(v_l) =  \cD \circ \cR \circ \cE (v_l)$ in the NORM hidden layer, the input function $v_l(x)$ can be represented discretely as $\textbf{V}_l \in \mathbb{R}^{n_x \times d_v}$. The LBO eigenfunctions $\phi_{\cM,i}$  is then discretised into a vector form $\phi_{\cM,i} \in \mathbb{R}^{n_x\times1}$. Suppose we consider $d_m$ modes for the spectral decomposition and reconstruction, then all $d_m$ LBO eigenfunctions form a matrix $\Phi \in \mathbb{R}^{n_x \times d_m }$. The complex geometric information of the domain has been embedded in the LBO eigenfunctions.

First, the encoder of the discretised input matrix $\textbf{V}_l$ can be expressed as: 
\begin{equation}\label{eq: step1}
 \mathcal{E}(\textbf{V}_l)=\Phi^{\dagger}\textbf{V}_l
\end{equation}
where $\Phi^{\dagger}\in \mathbb{R}^{ d_m \times n_x}$ refers to the pseudo inverse of the LBO eigenfunctions matrix $\Phi$, defined as: 
\begin{equation}
\Phi^{\dagger} = (\Phi^{\top}\Phi)^{-1}\Phi^{\top}
\end{equation}

Denote $\mathcal{R}$ as a simple linear mapping $\textbf{R} \in \mathbb{R}^{ d_m \times d_v \times d_v}$, the mappings on the encoded information can then be represented as: 
\begin{equation}\label{eq: step1}
\mathcal{R} \circ \mathcal{E}(\textbf{V}_l)=\left(\textbf{R}\cdot\left(\Phi^{\dagger}\textbf{V}_l\right)\right)
\end{equation}
where the tensor operation is defined as:  
\begin{equation}\label{tensor_product}
\left(\textbf{R}\cdot\left(\Phi^{\dagger}\textbf{V}_l\right)\right)_{k, l}=\sum_{j=1}^{d_v} \textbf{R}_{k, l, j}(\Phi^{\dagger}\textbf{V}_l)_{k, j}, \quad k=1, \ldots, d_m, \quad l=1, \ldots, d_v
\end{equation}

The decoder process is simply the linear transformation with the LBO eigenfunctions matrix. Then the discretised version of $\cN(v)$ can be given as:

\begin{equation}\label{laplace_layer}
\mathcal{D} \circ \mathcal{R} \circ \mathcal{E}= \Phi\left(\textbf{R}\cdot\left(\Phi^{\dagger}\textbf{V}_l\right)\right) 
\end{equation}

\subsection{Model structure with different input and output manifolds}

NORM can deal with different input and output manifolds by defining different L-layers. Different manifolds mean that the input and output have different LBO eigenfunctions, which will influence the encoder $\cE$ and decoder $\cD$ of the approximation block $\cN$ in each L-layer. Fig. \ref{fig:3manifolds}a shows the common situation where the input and output functions are both defined on the same manifolds $\cM$, which means all $\cE$ and $\cD$ in all L-layers take the same LBO eigenfunctions (marked as $\cM \rightarrow \cM$ for illustration).

As shown in Fig. \ref{fig:3manifolds}b, when the input and output functions are defined on different manifolds, there will be three different types of L-layers. The beginning L-layers ($\cX \rightarrow \cX$) employ LBO eigenfunctions of manifold $\cX$ for both the encoder $\cE$ and decoder $\cD$. The middle L-layer should utilise the LBO eigenfunctions of manifold $\cX$ for the encoder $\cE$, and the LBO eigenfunctions of manifold $\cY$ for the decoder $\cD$. Therefore, the output of the middle L-layer ($\cX \rightarrow \cY$) will have the same domain discretisation dimension of $\cY$. After that, the feature can be passed to the ending L-layers ($\cY \rightarrow \cY$) with the decoder and encoder defined with LBO eigenfunctions of manifold $\cY$.

Since the Fourier basis can also be regarded as a group of the LBO eigenfunction, NORM can be treated as a generalisation of FNO from the Euclidean space to Riemannian manifolds. Therefore, NORM can also deal with temporal functions as input or output. Fig. \ref{fig:3manifolds}c shows the problem with input temporal function and output spatial function. Similarly, the beginning multiple L-layers can define the decoder and encoder with the Fourier basis to process the input temporal function ($\cF \rightarrow \cF$). The middle L-layer can use the Fourier basis for the encoder $\cE$, and take the LBO eigenfunctions of manifold $\cY$ for the decoder $\cD$ (marked as $\cF \rightarrow \cY$). 

\begin{figure}[h]
\centering
\includegraphics[width=\textwidth]{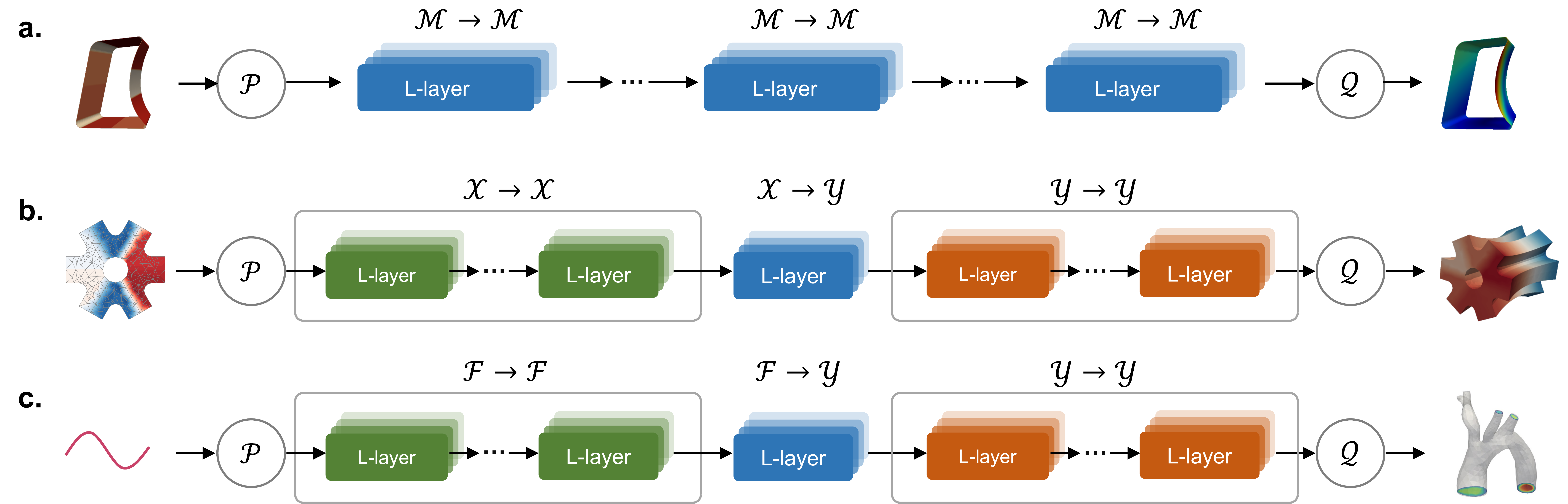}
\caption{Model structure with different input and output manifolds}
\label{fig:3manifolds}
\end{figure}

\clearpage

\section{About LBO eigenfunctions}

\subsection{Discretised LBO of triangular mesh}
The Laplace-Beltrami Operators (LBOs) of different geometric meshes are strictly defined in the differential geometry field \cite{alexa2020properties}, including triangular mesh, quadrilateral mesh, tetrahedral mesh, etc. Take the example of the triangular mesh, as shown in Fig. \ref{fig: contan_laplace} and Eq. \ref{eq: contan_laplace}, the discrete Laplacian of a scalar function $f$ on a vertex $i$ is defined by the cotangent function of the adjacent nodes, where $\mathcal{M}({i})$ is the vertex $i$ on the geometric mesh. The Laplacian of triangular mesh is also called the cotangent Laplace operator, which can be derived in many different ways, including finite analysis, finite volume method, or discrete exterior calculus \cite{crane2018discrete}.

\begin{equation}\label{eq: contan_laplace}
(\Delta f)_{{ij}} \approx \frac{1}{2} \sum_{{j} \in \mathcal{M}({i})}\left(\cot \alpha_{{ij}}+\cot \beta_{{ij}}\right)\left(f_{{i}}-f_{{j}}\right)
\end{equation}

After defining the LBO of complex geometries, the LBO eigenfunctions can be obtained by solving the eigenfunctions $\Delta \phi =\lambda \phi$ with Galerkin method, power iteration, or other numerical methods \cite{vallet2008spectral,neumann2014compressed}. 

\begin{figure}[h]
\centering
\includegraphics[width=5cm]{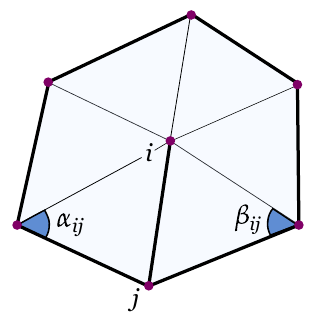}
\caption{Cotangent Laplace operator of the triangular mesh.}
\label{fig: contan_laplace}
\end{figure}

\subsection{Projection error of LBO eigenfunctions}

\begin{Theo}[\textbf{Projection error of LBO eigenfunctions}] \cite{aflalo2013spectral}  
\label{theorem_lbo_error} 
Let  $\cM$ be a given Riemannian manifold, an induced LBO $\Delta$, with associated spectral basis $\phi_i$, where $\Delta \phi_i=\lambda_i \phi_i$. Consider a smooth function $f \in L^2(\cM , \R)$,  the projection error is:
\begin{equation}
\left\|r_n\right\|^2_{L^2} \equiv\left\|f-\sum_{i=1}^n\left\langle f, \phi_i\right\rangle \phi_i\right\|^2_{L^2} \leq \frac{\left\|\nabla f\right\|^2_{L^2}}{\lambda_{n+1}}
\end{equation}

and $\left\|r_n\right\|_{L^2} \rightarrow 0$ as $n \rightarrow \infty$.

\end{Theo}

\begin{proof}[Proof of Theorem \ref{theorem_lbo_error}] \cite{aflalo2013spectral}  :

Define the projection residual function as $r_n = f-\sum_{i=1}^n\left\langle f, \phi_i\right\rangle \phi_i$. Since $\left\langle  \phi_i, 
 \phi_j\right\rangle = \delta_{ij}$, it is easy to verify that $\left\langle  r_n, 
 \phi_i\right\rangle = 0, \forall i, 1\leq i \leq n$. The projection error can be given as:
\begin{equation}
\left\|r_n\right\|^2_{L^2} = \left\|\sum_{i=n+1}^{\infty}\left\langle r_n, \phi_i\right\rangle \phi_i\right\|^2_{L^2} \\ = \sum_{i=n+1}^{\infty}\left\langle r_n, \phi_i\right\rangle^2 
\end{equation}

The gradient bound of residual function is:

\begin{equation}
\begin{aligned}
\left\|\nabla r_n\right\|^2_{L^2} & =\left\|\sum_{i=n+1}^{\infty}\left\langle r_n, \phi_i\right\rangle \nabla \phi_i\right\|^2_{L^2} =   \sum_{i=n+1}^{\infty} \lambda_{i}\left\langle r_n, \phi_i\right\rangle^2\\
\end{aligned}
\end{equation}

Since the eigenvalues are ascending, $\lambda_1 \leq \lambda_2, \ldots $, we have:

\begin{equation}
\begin{aligned}
\left\|\nabla r_n\right\|^2_{L^2} \geq \lambda_{n+1} \sum_{i=n+1}^{\infty}\left\langle r_n, \phi_i\right\rangle^2 = \lambda_{n+1} \left\|r_n\right\|^2_{L^2} 
\end{aligned}
\end{equation} 

So that:
\begin{equation}
\left\|r_n\right\|^2_{L^2}  \leq  \frac{\left\|\nabla r_n\right\|^2_{L^2}}{ \lambda_{n+1}}  
\end{equation}

Then, we build the connection between $\left\|\nabla r_n\right\|^2_{L^2}$ and $\left\|\nabla f\right\|^2_{L^2}$.  We have:

\begin{equation}
\left\|\nabla f\right\|^2_{L^2} = \left\|\nabla r_n + \sum_{i=1}^{n}\left\langle f, \phi_i\right\rangle \nabla \phi_i \right\|^2_{L^2}  = \left\|\nabla r_n\right\|^2_{L^2} +  \sum_{i=i}^{n} \lambda_{i}\left\langle f, \phi_i\right\rangle^2
\end{equation}

Then, it follows that:

\begin{equation}
\left\|r_n\right\|^2_{L^2}  \leq  \frac{\left\|\nabla r_n\right\|^2_{L^2}}{ \lambda_{n+1}}   \leq  \frac{\left\|\nabla f\right\|^2_{L^2}}{ \lambda_{n+1}}  
\end{equation}

By assumption, $f \in L^2$. Therefore, $\left\|\nabla f\right\|^2_{L^2}$ is bounded. Theorem 2.6 in Ref \cite{nica2011eigenvalues} shows that $\lim _{n \rightarrow \infty} \lambda_n=\infty$, so it can be verified that $\lambda_{n+1} \rightarrow \infty $ forces $\left\|r_n\right\|_{L^2}  \rightarrow 0$.

\end{proof}

\begin{Theo}\label{theorem_lbo_optimal} Optimality of LBO eigenfunctions \cite{aflalo2015optimality}  
Given a Riemannian manifold $\cM$, the induced LBO $\Delta$, and its spectral basis $\phi_i$, where $\Delta \phi_i=\lambda_i \phi_i$, and a real scalar value $0 \leq \alpha<1$. For any $f \in L^2(\cM , \R)$, there is no orthonormal basis of functions $\left\{\psi_i\right\}_{i=1}^{\infty}$, and an integer $n$ such that
\begin{equation}
\left\|f-\sum_{i=1}^n\left\langle f, \psi_i\right\rangle \psi_i\right\|^2_{L^2} \leq \alpha \frac{\left\|\nabla f\right\|^2_{L^2}}{\lambda_{n+1}}, \quad \forall f
\end{equation}
\end{Theo}

\clearpage

\section{About universal approximation theory}

\subsection{Non-Euclidean universal approximation condition}

Recently, Kratsios et al. \cite{kratsios2020non} investigated which modifications to the input and output of a neural network could deal with non-Euclidean while preserving the universal approximation capability. Based on their research, the Non-Euclidean Universal Approximation Condition can be summarised as follows:

\begin{Theo} [\textbf{Non-Euclidean Universal Approximation Condition} \cite{kratsios2020non}]

Let $\phi: \cX \rightarrow \R^m$ and $\rho: \R^n \rightarrow \cY$,  where  $\cX$ and  $\cY$ are topological spaces. Equip $C(\cX, \cY)$ with the bounded compact topology, $C\left(\R^m, \R^n\right)$ with the topology of uniform convergence on compacts, let $\cF$ be a subset of $C\left(\R^m, \R^n\right)$, and define the subset $\cF \rho, \phi \subseteq C(\cX, \cY)$ by
\begin{equation}
\cF_{\rho, \phi} \triangleq\{g \in C(\cX, \cY): g=\rho \circ f \circ \phi \text { , where } f \in \cF\}.
\end{equation}
Suppose that  $\cF$ is dense in $C\left(\R^m, \R^n\right)$. If $\phi$ is a continuous injective map, $\rho$ is a continuous surjective map, then  $\cF_{\rho, \phi}$ is dense in $C(\cX, \cY)$.
\label{non_eu}
\end{Theo} 

Here, $\cF_{\rho, \phi}$ is dense in $C(\cX, \cY)$ means that given any $\epsilon > 0$ and $g_C \in C(\cX, \cY)$, there exists $g \in \cF_{\rho, \phi} $ satisfying:
\begin{equation}
\sup _{a \in K}\|g - g_C\| \leq \epsilon
\end{equation}

For the defined neural operator on Riemannian manifolds $\cN= \cD \circ \cR \circ \cE$, suppose the approximator $\cR$ is a neural network that holds universal approximation property. Then, based on Theorem \ref{non_eu}, the key step of establishing an $\cN$ with universal approximation property is to construct a continuous injective map $\cE$ from functions on manifolds to the Euclidean space and a continuous surjective map $\cD$ from the Euclidean space to functions on manifolds. 

\subsection{Proof of universal approximation of NORM}

Let $\cN= \cD \circ \cR \circ \cE$ be a neural operator for $C(\cA, \cU)$, where $\cR$ represent a neural network that has universal approximation property in $C\left(\R^{\dx}, \R^{\dy}\right)$. The encoder is defined as: $ \cE:{\cA} \rightarrow\ \R^{\dx}$ and $\cE (a) := (\left\langle a, \phi_{\cX,1} \right\rangle, \ldots, \left\langle a, \phi_{\cX,\dx} \right\rangle), \forall a \in \cA$. The decoder is defined as $\cD:\R^{\dy}\rightarrow {\cU}$, and $\cD (\beta) =  \sum_{i=1}^{\dy} \beta_i \phi_{\cY,i} , \forall \, \beta \in \R^{\dy}$. $\cX$ and $\cY$ are Riemannian manifolds. $\cA$ and $\cU$ are $L^2$ spaces. $\phi_{\cX,i}$ and $\phi_{\cY,i}$ are LBO eigenfunctions of manifolds $\cX$ and $\cY$, respectively. The universal approximation theorem of neural operators on Riemannian manifolds is as follows:

\begin{Theo} [\textbf{Universal approximation theorem for the neural operator on Riemannian manifolds}]
Let $\cG: \cA(\cX;\R) \rightarrow \cU(\cY;\R)$ be a Lipschitz continuous operator, $K \in \cA$ is compact set. Then for any $\epsilon > 0$, there exists a neural operator on $\cN= \cD \circ \cR \circ \cE$, such that:
\begin{equation}
\sup _{a \in K}\|\mathcal{G}(a) -\mathcal{N}(a)\|_{L^2} \leq \epsilon
\end{equation}
\label{ua_norm}
\end{Theo} 
\begin{proof}[Proof of Theorem \ref{ua_norm}]:

Since $\cN $ is defined with a finite number of LBO eigenfunctions, the encoder $\cE$ is not injective, and the decoder $\cD$ is not surjective, so we cannot derive the universal approximation property directly based on Theorem \ref{non_eu}. 

The following proof consists of three steps: first, the universal approximation error on the projection subspace, then the decoding error on $\cY$, and the influence of the encoding error on $\cX$.

\textbf{Step 1: Approximation error on the projection subspace }

For the input function space $\cA$, let $V_{\cX, \dx} \subset \cA$ be the $\dx$-dimensional projection space of the LBO eigenfunctions of the manifold $\cX$, namely $V_{\cX, \dx}=\operatorname{span}\left\{\phi_{\cX,1}, \phi_{\cX,2}, \ldots, \phi_{\cX,\dx}\right\} \subset \cA$. Therefore, the orthogonal projection of the input function $a$ can be represented as $\Pi_{V_{\cX, \dx}}a $:
\begin{equation}
\Pi_{V_{\cX, \dx}}a =\sum_{i=1}^{\dx}\left\langle a, \phi_{\cX,i} \right\rangle \phi_{\cX,i}
\end{equation}

Similarly, for the output function space $\cU$, let $V_{\cY, \dy} \subset \cU$ be the $\dy$-dimensional projection space of the LBO eigenfunctions of the manifold $\cY$, namely $V_{\cY, \dy}=\operatorname{span}\left\{\phi_{\cY,1}, \phi_{\cY,2}, \ldots, \phi_{\cY,\dy}\right\} \subset \cU$. Then the orthogonal projection of the output function $u$ on $V_{\cY, \dy}$ can be defined as:
\begin{equation}
\Pi_{V_{\cY, \dy}}u =\sum_{i=1}^{\dy}\left\langle u, \phi_{\cY,i} \right\rangle \phi_{\cY,i}
\end{equation}

Let  $\cN^\dagger= \cD^\dagger \circ \cR \circ \cE^\dagger$ be a neural operator on the projection space $C(V_{\cX, \dx}, V_{\cY, \dy})$, where $\cR$ represent a neural network that has universal approximation property in $C\left(\R^{\dx}, \R^{\dy}\right)$. The encoder can be defined as the following mapping: 
\begin{equation}\label{eq:encoder}
\cE^\dagger:V_{\cX, \dx} \rightarrow\ \R^{\dx}, \quad 
\cE^\dagger (a) := (\left\langle a, \phi_{\cX,1} \right\rangle, \ldots, \left\langle a, \phi_{\cX,\dx} \right\rangle) 
\end{equation}

And the decoder can be given as: 
\begin{equation}\label{eq:decoder}
\cD^\dagger:\R^{\dy}\rightarrow V_{\cY, \dy}, \quad 
\cD^\dagger (\beta) =  \sum_{i=1}^{\dy} \beta_i \phi_{\cY,i} \quad \forall \, \beta \in \R^{\dy}
\end{equation}

Then $\cD^\dagger$ and $\cE^\dagger$ follow the assumption in Theorem \ref{non_eu}, that $\cE^\dagger$ is a continuous injective map and $\cD^\dagger$ is a continuous surjective map. Based on Theorem \ref{non_eu}, $\cN^\dagger$ is an universal approximator in $C(V_{\cX, \dx}, V_{\cY, \dy})$. Suppose $K$ is compact set in $\cA$, then for any $\epsilon > 0$, there exists a $\cN^\dagger$ and a $\dx  \in \mathbb{N} $, such that:
\begin{equation}
\sup _{a \in K}\|\cN^\dagger(\Pi_{V_{\cX, \dx}}a) -\Pi_{V_{\cY, \dy}}\cG(\Pi_{V_{\cX, \dx}}a)\|_{L^2} \leq \frac{\epsilon}{3}
\end{equation}

Note that, we have $\cE^\dagger(\Pi_{V_{\cX, \dx}}a) = \cE(a) , \forall a \in \cA$ and $\cD^\dagger(\beta) = \cD(\beta) , \quad \forall \, \beta \in \R^{\dy}$. And also $\cN^\dagger(\Pi_{V_{\cX, \dx}}a) = \cN(a) , \forall a \in \cA$. Hence, we have:
\begin{equation}\label{app_step1}
\sup _{a \in K}\|\cN(a)-\Pi_{V_{\cY, \dy}}\cG(\Pi_{V_{\cX, \dx}}a)\|_{L^2} \leq \frac{\epsilon}{3}
\end{equation}

\textbf{Step 2: Decoding error on the output}

Theorem \ref{theorem_lbo_error} shows that the projection error of LBO eigencfunctions convergence to 0 when with a sufficient number of basis. Therefore, for any $\epsilon > 0$, there exists a number $\dy \in \mathbb{N}$, such that:
\begin{equation}\label{app_step2}
\sup _{a \in K}\| \cG(\Pi_{V_{\cX, \dx}}a) - \Pi_{V_{\cY, \dy}}\cG(\Pi_{V_{\cX, \dx}}a) \|_{L^2} \leq \frac{\epsilon}{3}
\end{equation}

\textbf{Step 3: Encoding error on the input}  

Here we assume $\cG$ is Lipschitz continuous, that is, there exists a constant $M > 0$ that:
\begin{equation}\label{lip_cont}
\left\|\cG\left(a_1\right)-\cG\left(a_2\right)\right\|_{L^2} \leq M\left\|a_1-a_2\right\|_{L^2}, \quad \forall a_1, a_2 \in \cA
\end{equation}

Since $\Pi_{V_{\cX, \dx}}a$ can approximate $a$ at any accuracy, then for any $\epsilon > 0$, there exists $\dx \in \mathbb{N}$, such that:
\begin{equation}\label{app_step3}
\sup _{a \in K}\| \cG(\Pi_{V_{\cX, \dx}}a) - \cG(a) \|_{L^2} \leq \frac{\epsilon}{3}
\end{equation}

\textbf{Step 4: Combining the errors from steps 1 to 3}  

Therefore, triangle inequality implies that:
\begin{equation}\label{deal_4}
\begin{aligned}
\sup _{a \in {K}}\|\cN(a) -\cG(a) \|_{L^2} \leq & \|\cN(a)-\Pi_{V_{\cY, \dy}}\cG(\Pi_{V_{\cX, \dx}}a)\|_{L^2} + \\
&  \| \cG(\Pi_{V_{\cX, \dx}}a) - \Pi_{V_{\cY, \dy}}\cG(\Pi_{V_{\cX, \dx}}a) \|_{L^2} + \\
& \| \cG(\Pi_{V_{\cX, \dx}}a) - \cG(a) \|_{L^2} \\  \leq  & 
\epsilon
\end{aligned}
\end{equation}

This concludes the proof.
\end{proof}

According to the proof procedure above, one fundamental characteristic of NORM that supports its universal approximation property is the ability of LBO eigenfunctions to approximate continuous functions of Riemannian manifolds with arbitrary accuracy. Therefore, this proof procedure can be generalised to other potential extensions of NORM that utilise different orthogonal basis functions rather than LBO, as long as they can also approximate functions on Riemannian manifolds. 

\clearpage
\section{Data generation}
\subsection{Learning PDEs solution operators}
\subsubsection{Darcy problem (Case 1)}

Darcy flow equation is a classical law for describing the flow of a fluid through a porous medium. This problem is also widely used for various neural operator verification. We focus on the darcy equation on 2D irregular geometric domain, which can be described by the following equation:  

\begin{equation}
\label{eq_darcy}
-\nabla \cdot(a \nabla u )=f
\end{equation}
where $a$ is the diffusion coefficient field, $u$ is the pressure field and $f$ is the source term to be specified. The learning target in the Darcy flow problem is the mapping from the diffusion coefficient field $a(\mathbf{x})$ to the pressure field $u(\mathbf{x})$: 
\begin{equation}
\cG:  a(\mathbf{x}) \mapsto u(\mathbf{x}), \quad \mathbf{x} \in \cM
\end{equation}

For this case, the source term is set to 1, i.e. $f=1$. The input diffusion coefficient field $a(\mathbf{x})$ is generated by the Gaussian random field with a piecewise function, namely $a(\mathbf{x})= t(\mu)$, where $\mu$ is a distribution defined by $\mu=\cN\left(0,(-\Delta+25 I)^{-2}\right)$ \cite{kovachki2021neural}. After sampling from this distribution, the diffusion coefficient field $a(\mathbf{x})$ can be generated from the following piecewise function: 

\begin{equation}
\label{eq: input_generation}
t(\mu)= \begin{cases}12, & \mu \geq 0 \\ 4, & \mu<0\end{cases}
\end{equation}

We design an irregular geometric domain with a thin rectangle notch inside, which can increase the complexity of the learning problem. As shown in Fig. \ref{fig: darcy1}a, the geometric domain of the Darcy case is divided by the triangle mesh with 2290 nodes, where the outside boundary condition follows $u=u_{\partial D}(x)$ and the three boundaries of the inside rectangle follows $u=0$. The boundary condition $u_{\partial D}(x)$ is shown in Fig. \ref{fig: darcy1}b, and Fig. \ref{fig: darcy1}c-d show the input field and output field of one labelled data. In this case, 1200 labelled data are randomly generated, 1000 of them are used as the training data, and the rest 200 groups are defined as the test data.

\begin{figure}[h]
\centering
\includegraphics[width=16.5cm]{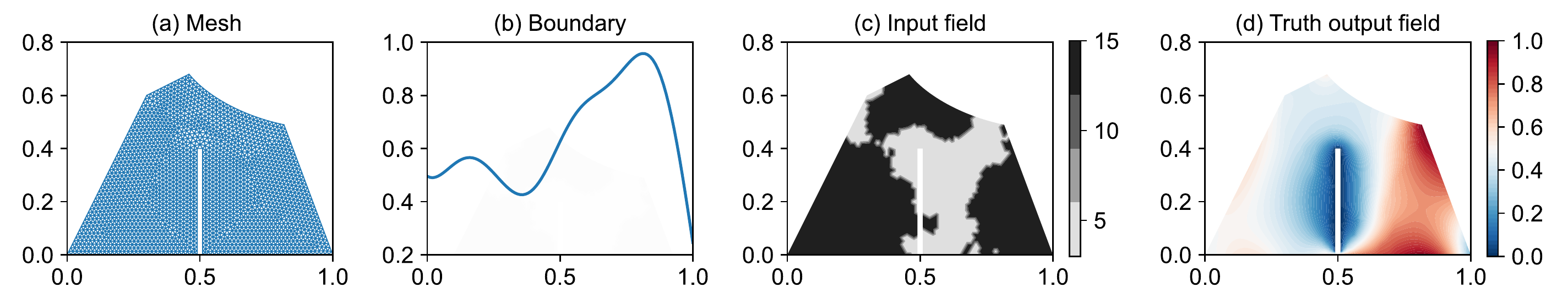}
\caption{The mesh, boundary conditions, input field and output field of the Darcy flow case 1.}
\label{fig: darcy1}
\end{figure}

\subsubsection{Pipe turbulence (Case 2)}

Flow in a pipe is very common in physiological systems, here we consider turbulent flows in a complex pipe, where the governing equation is the 2-d Navier-Stokes equation for a viscous, incompressible fluid:
\begin{equation}
\frac{\partial \boldsymbol{v}}{\partial t}+(\boldsymbol{v} \nabla) \boldsymbol{v}=-\nabla p+\mu \nabla^2 \boldsymbol{v}, \quad \nabla \cdot \boldsymbol{v}=0
\end{equation}
where $\boldsymbol{v}$ is the velocity, $p$ is the pressure, and the fluid chosen is water. We use the $k-\varepsilon$ model of Reynolds Average Navier-Stockes (RANS) models in the Comsol Multiphysics to conduct simulations. The geometry of the pipe is shown in Fig. \ref{fig: pipeflow}a. The average normal velocity $\boldsymbol{v}=[1,5]$ is imposed at the inlet, zero pressure condition is imposed at the outlet, and the no-slip boundary condition is imposed at the pipe surface. For given inlet velocity, we perform a $1\mathrm{~s}$ transient simulation to predict the velocity distribution in the pipe. 
The learning problem of this case is defined as the mapping from the velocity field of $t \in[0.1 s, 0.9 s]$ to the velocity field of $t+0.1 s$, as shown in Fig. \ref{fig: pipeflow}b-c. The input and output mesh both comprise 2673 nodes. Finally, we generate 400 trajectories, including 80 sets of transient simulations of different inlet velocities and 5 input-output pairs for each simulation. 300 of them are used as training data, and the rest of the 100 groups are defined as test data.

\begin{figure}[h]
\centering
\includegraphics[width=\textwidth]{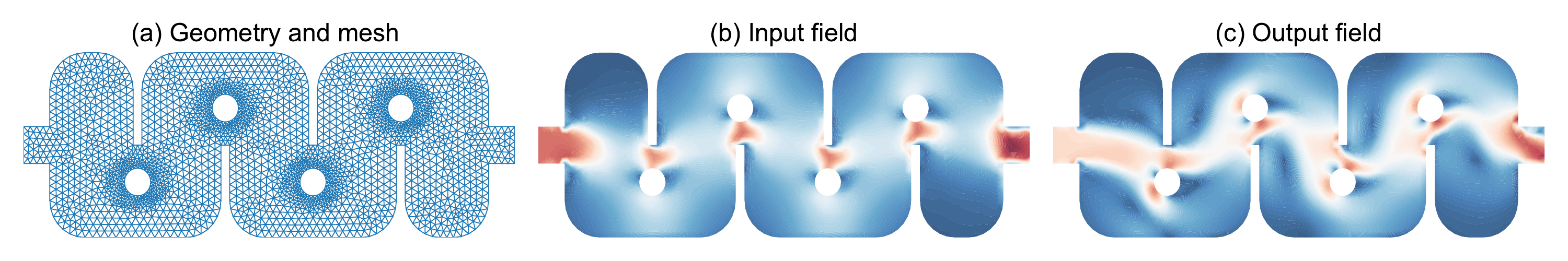}
\caption{The geometric domain and input-output pair of the pipe turbulence  case.}
\label{fig: pipeflow}
\end{figure}

\subsubsection{Heat transfer (Case 3)}
Heat transfer describes the transfer of energy as a result of a temperature difference, which widely exists in nature and engineering technology fields. A solid heat transfer case for a three-dimensional complex part is constructed to verify the ability of the method to handle the prediction problem with complex geometric domains. The heat equation can be represented in the following form (assuming no mass transfer or radiation). 

\begin{equation}
\label{heat_trans}
\rho C \frac{\partial T}{\partial t}= \nabla \cdot K\nabla T+\dot{Q}
\end{equation}
where $T$ is temperature as a function of time and space. $\rho$, $C$, and $K$ are the density, specific heat capacity, and thermal conductivity of the medium, respectively. And  $\dot{Q}$ is the internal heat source. For this case, the part material is copper with a residual resistivity ratio of 30, and $\dot{Q}$ is set to 0. The three-dimensional design model of the solid part is shown in Fig. \ref{fig: part}.

\begin{figure}[h]
\centering
\includegraphics[width=8cm]{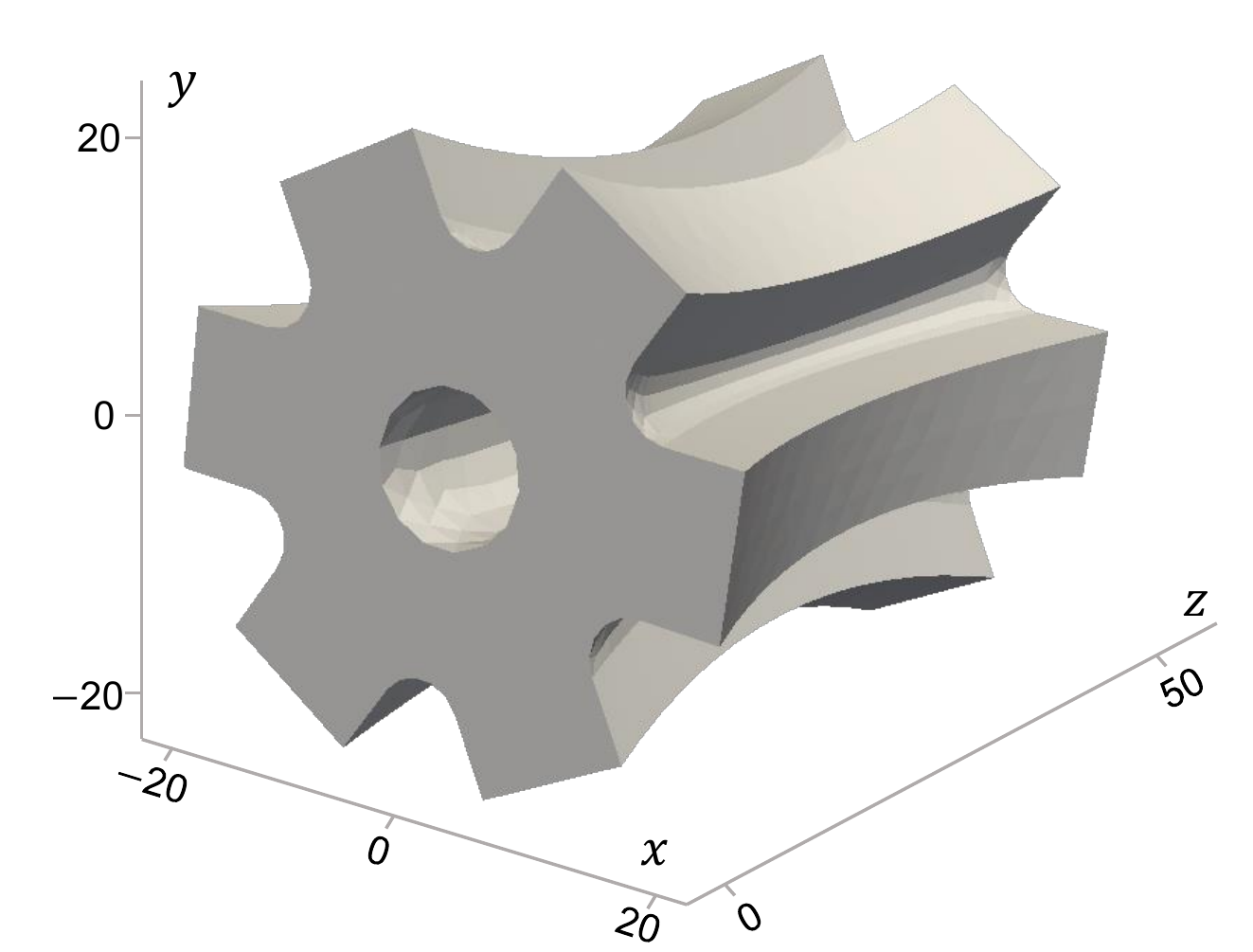}
\caption{The 3-dimensional model for the heat transfer case.}
\label{fig: part}
\end{figure}

The boundary conditions are imposed on the left and right sides. Specifically, on the left side $(z=0)$ is the low-temperature zone with the boundary condition $T_L (x,y,z=0)$. On the right side $(z=50)$ is the high-temperature zone with the boundary condition $T_H (x,y,z=50)$. The heat transfer problem is solved by the commercial simulation software Comsol, and the two boundary conditions are set as follows.

\begin{equation}
    T_L(x, y, z=0)=\left\{\begin{array}{l}T_1, 0<\theta \leq \pi/3 \\ 
                                          T_2, \pi/3<\theta \leq 2\pi/3 \\ 
                                          T_3, 2\pi/3<\theta \leq \pi \\ 
                                          T_4, \pi<\theta \leq 4\pi/3  \\
                                          T_5, 4\pi/3<\theta \leq 5\pi/3 \\ 
                                          T_6, 5\pi/3<\theta \leq 2\pi
                          \end{array}\right.
\end{equation}

\begin{equation}
    T_H(x, y, z=50)=\left\{\begin{array}{l}420,5<\sqrt{x^2+y^2} \leq 9 \\ 440,9<\sqrt{x^2+y^2} \leq 15 \\ 430, \sqrt{x^2+y^2}>15\end{array}\right.
\end{equation}
where $\theta$ is the center angle corresponding to the mesh node,  $T_1$, $T_2$, $T_3$, $T_4$, $T_5$, and $T_6$ are six temperature parameters which are randomly sampled from $290K-350K$.

The learning problem of this case is defined as the mapping from the low-temperature boundary condition $T_L (x,y,z=0)$ to the solid part's 3-dimensional temperature field $T_{t=3} (x,y,z)$ after 3s of heat transfer. The mesh for the input and output field is shown in Fig. \ref{fig: heat}a and Fig. \ref{fig: heat}c. The input domain is discretised by the triangular mesh with 186 nodes, and the output domain is discretised by the tetrahedral mesh including 7199 nodes. Furthermore, the input temperature field and output temperature field are shown in Fig. \ref{fig: heat}b and Fig. \ref{fig: heat}d. Note that the space domains for input and output are different in this case. The training data set consists of 100 labelled data, and another 100 samples are defined as test data.

\begin{figure}[h]
\centering
\includegraphics[width=\textwidth]{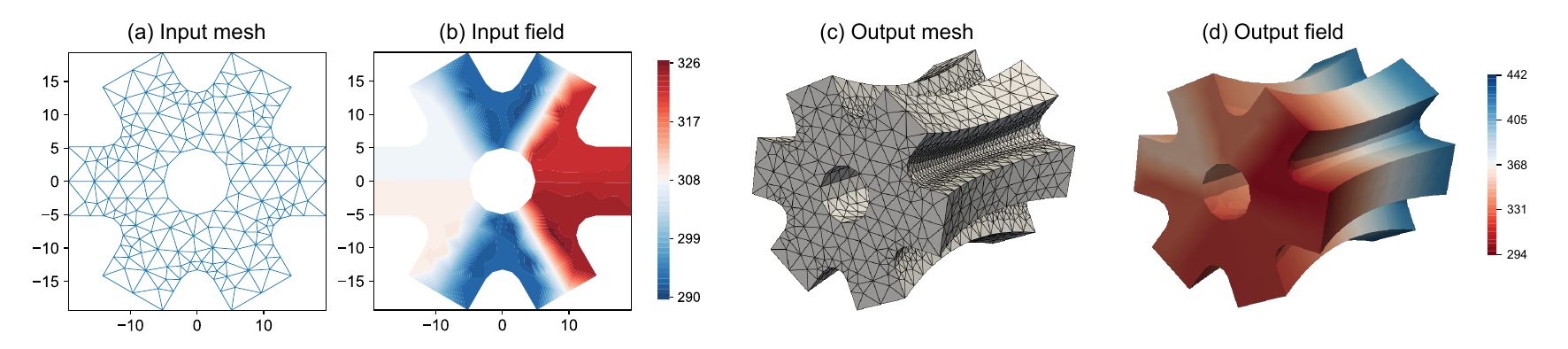}
\caption{The mesh and field for input and output data of the heat transfer case.}
\label{fig: heat}
\end{figure}

\subsection{Composite workpiece deformation prediction (Case 4)}

\subsubsection{Background}

Carbon Fiber Reinforced Polymer (CFRP) composite materials, which are lightweight and high-strength, are preferred materials for weight reduction and performance enhancement in modern aerospace industries \cite{liu2021multi}. CFRP parts used in aerospace have large size and complex shapes, therefore imposing higher requirements on deformation control during the manufacturing process. As one of the key processes of composites manufacturing, curing refers to using high temperatures to stimulate the chemical reactions and physical changes of the resin, thereby forming CFRP parts with load-bearing properties. Non-uniform residual stresses generated during the curing process can cause curing deformations such as spring-back, warpage, and bending-twisting combination, which not only risks the CFRP parts being scrapped but also becomes an important reason for damages and failures during subsequent assemblies \cite{ding2015three}.

Regulating the curing temperature distribution of a part is an effective means of controlling curing deformation. However, optimising the curing temperature field usually requires a large number of iterations based on the prediction results of the curing deformation field. Therefore, establishing a fast prediction model from the curing temperature field to the deformation field is of great significance for optimising and designing the temperature field of CFRP parts \cite{struzziero2019numerical}. Numerical simulation methods, such as the finite element method, have become the most widely used curing process modelling methods. However, high-fidelity curing deformation simulation requires accurate modelling of complex physicochemical processes and fine meshing of the part calculation domain, resulting in highly expensive and time-consuming calculations. Therefore, the computational efficiency of the traditional numerical modelling methods is insufficient to meet the requirements for the temperature field optimisation of the CFRP parts. Establishing a data-driven temperature-to-deformation prediction model can provide essential support for further curing process optimising. 

The CFRP workpiece used for verification is the air-intake structural part of a jet. As shown in Fig. \ref{fig:workpiece}, this workpiece is a complex closed revolving structure formed by multiple curved surfaces, which would deform significantly after high-temperature curing. The curing process is zoned self-resistance electric heating, where the internal and external surfaces of the workpiece are divided into multiple areas according to the radius of curvature for independent temperature control. The theoretical support of this case can be found in the authors' previous work \cite{liu2023active}.

\begin{figure}[h]
\centering
\includegraphics[width=0.8\textwidth]{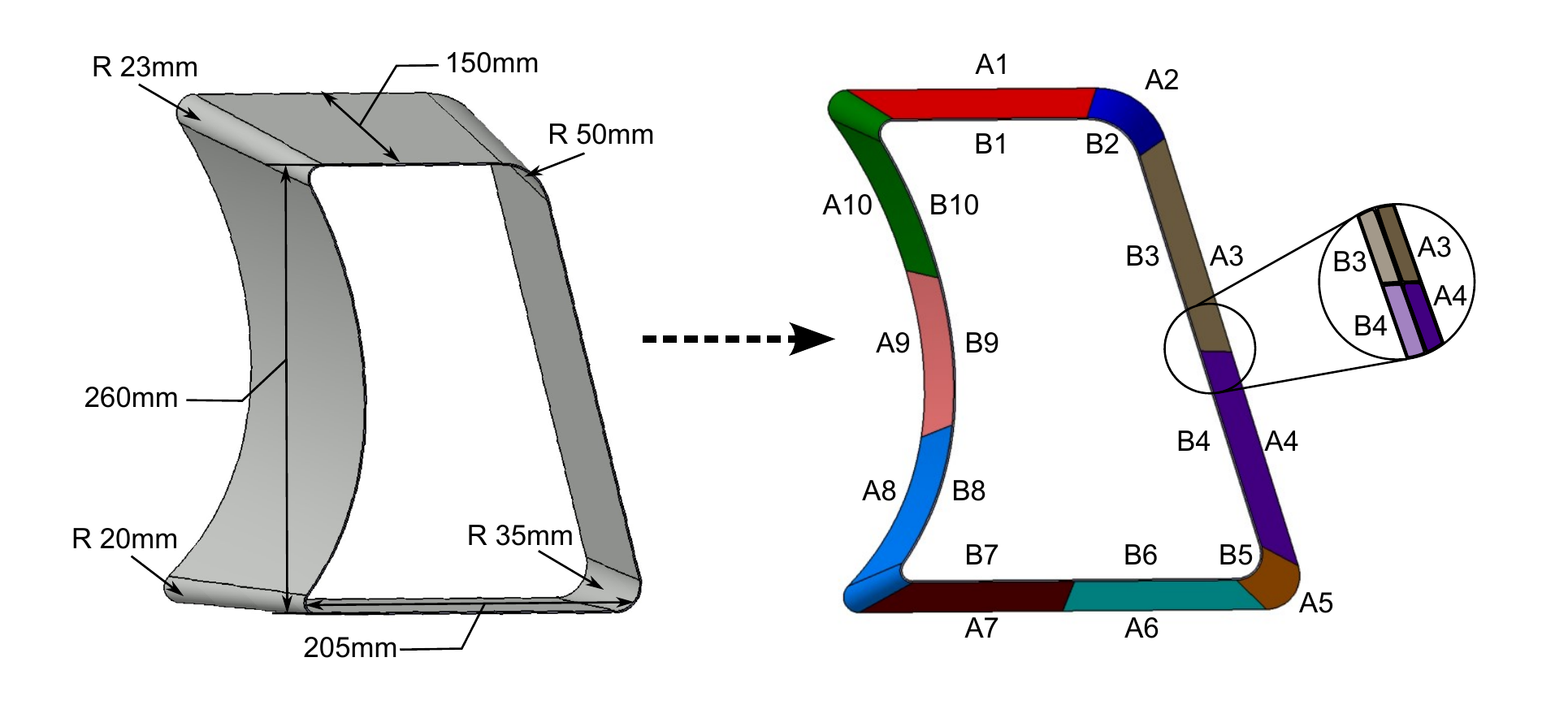}
\caption{The CFRP part for case study.}
\label{fig:workpiece}
\end{figure}

\subsubsection{Data generation}
The internal and external surfaces of the composite part are divided into 20 separate curing zones, with the temperature of each zone generated randomly between the $370K\sim400K$. The temperature fields and the deformation fields of the composite part were simulated by considering heat transfer, curing reactions, viscoelastic mechanics and other processes. As shown in Fig. \ref{fig:Composite}a, the part geometry is represented by a tetragonal mesh constructed in the commercial simulation software Comsol Multiphysics, comprising a total of 8232 nodes. A total of 500 data pairs of temperature-to-deformation fields were simulated, 400 of them are defined as training data and the rest 100 as test data. The examples of the input temperature field and output deformation field are shown in Fig. \ref{fig:Composite}b and Fig. \ref{fig:Composite}c. 

\begin{figure}[h]
\centering
\includegraphics[width=\textwidth]{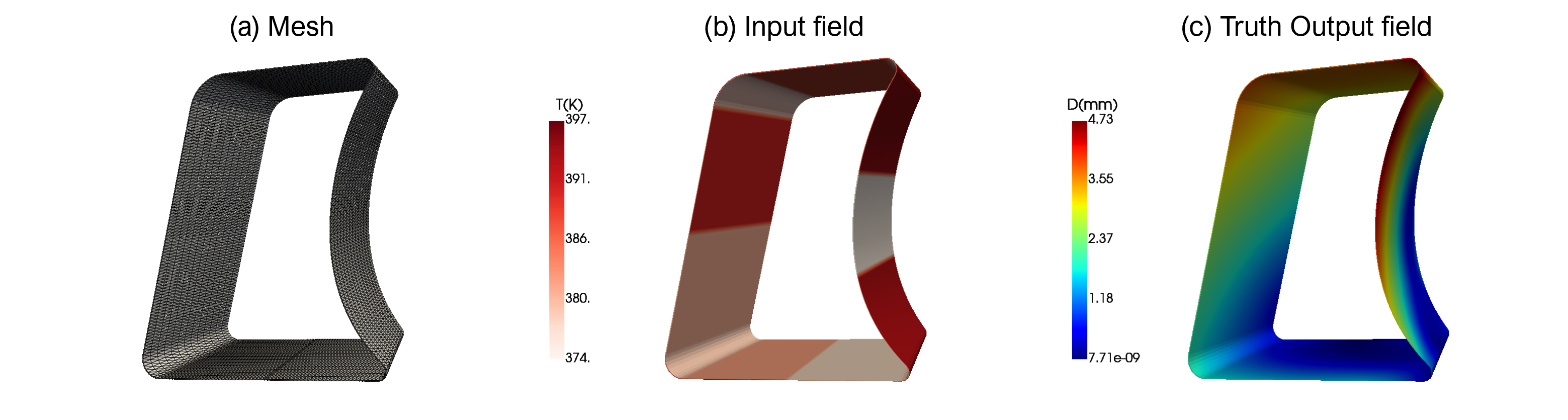}
\caption{The mesh and field for input and output data of the composite case.}
\label{fig:Composite}
\end{figure}

\subsection{Blood flow dynamics prediction (Case 5)}

\subsubsection{Background}
Blood flow dynamics is the science of studying the characteristics and regularities of the movement of blood and its constituents in the organism \cite{marsden2014optimization}. The driving force of hemodynamic research consists of the following three aspects: 1) hemodynamic research can assist researchers in studying the laws of blood flow in the vascular system of a healthy human body \cite{secomb2016hemodynamics}; 2) hemodynamic research can help analyse the causes and effects of cardiovascular diseases \cite{hope2010clinical,scotti2007compliant}; and 3) hemodynamic research can promote the development and optimisation of diagnosis and treatment techniques for vascular diseases from a therapeutic point of view \cite{marsden2014optimization}. In recent years, the development of measurement techniques has made it possible to reconstruct patient-specific vascular structures by CT imaging \cite{shahcheraghi2002unsteady} and 4D MRI \cite{markl20124d}. Furthermore, computational fluid dynamics (CFD) modelling has been used to simulate blood flow by numerically solving the Navier-Stokes equations, showing promising potential in clinical practice \cite{caballero2013review,morris2016computational}. On the one hand, CFD allows the non-invasive acquisition of haemodynamic parameters that in vitro measurements cannot measure. On the other hand, CFD can provide visualisation of the flow field results to investigate the effect of specific structures on haemodynamics. Despite the excellent predictive performance of CFD modelling, its high computational cost and the long processing time have prevented it from clinical practice in time-sensitive areas such as preoperative planning and serial monitoring \cite{kissas2020machine}. To address the above limitations, we aim to explore the possibilities of data-driven neural operator models for the surrogate modelling of haemodynamic CFD.

This case focuses on the hemodynamics of the human thoracic aorta, the largest human artery responsible for transporting oxygen and nutrient-rich blood to various organs. We consider a similar monitoring scenario in the paper \cite{maul2023transient}, where the inputs are the time-varying blood flow metrics monitored in real-time such as flow rate, blood flow, pressure, etc., at the inlet and outlet \cite{wen2010investigation}, and the outputs are the velocity field of the aorta. The field outputs can provide more information reflecting the state and evolution of the patient's disease than the individual metrics.

\begin{figure}[h]
\centering
\includegraphics[width=0.6\textwidth]{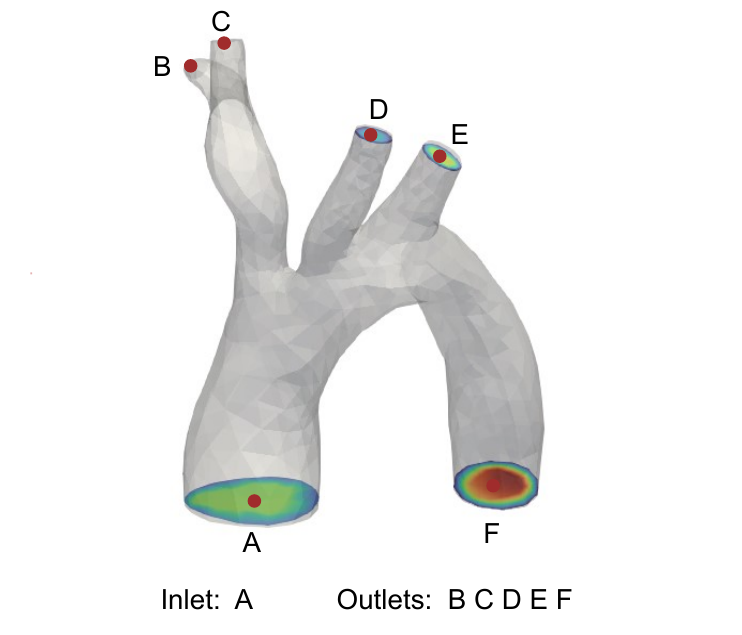}
\caption{The diagram of inlet and outlets of the aorta.}
\label{fig: InOutlet}
\end{figure}

\subsubsection{Data generation}
The aorta contains 1 inlet, i.e. the ascendens aortae, and 5 outlets, i.e. the descendens aortae, the left/right subclavian arteries, and the left/right common caroti arteries, as shown in Fig. \ref{fig: InOutlet}. The velocity boundary condition is imposed at the inlet, and pressure boundary conditions are imposed at the outlets, which describes the time-varying characteristic of velocity and pressure during one cardiac cycle (1.2 s). To simulate the gradual increase of velocity and pressure to a peak during the systolic phase and a fall back from the peak during the diastolic phase, we approximated the changes of velocity and pressure using a simplified Gaussian function, as shown in Fig. \ref{fig: PV_curves}. Then a set of boundary conditions can be determined by setting the mean, bandwidth, and peak values. 

Blood is assumed to be a homogeneous Newtonian fluid with a density of 1060 $kg/m^3$ and a viscosity of 0.0035 $N\cdot s/m^2$, and the flow of blood in the aorta is laminar flow. The vessel wall was assumed to be rigid, and no-slip conditions were considered. A total of 500 velocity/pressure curves are generated as boundary conditions, which are treated as inputs. Velocity fields are simulated based on Comsol Multiphysics software, which serves as outputs. The simulation results are derived using tetrahedral mesh (1656 spatial nodes), with a sampling interval of 0.01s in the time dimension, containing 121 time nodes. 400 of them are used as training data and the rest 100 are used as test data.

\begin{figure}[t]
\centering
\includegraphics[width=0.95\textwidth]{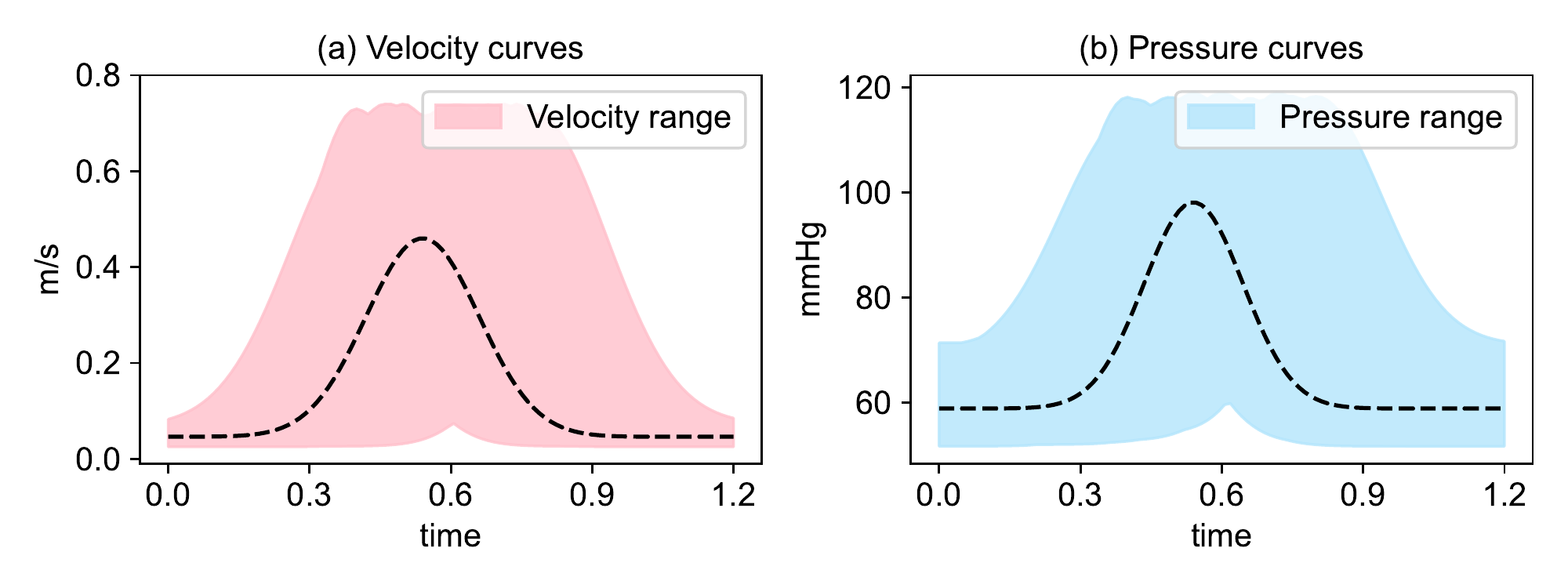}
\caption{The velocity and pressure curves.}
\label{fig: PV_curves}
\end{figure}

\clearpage
\section{Experimental setting}

 We compare the performance of the proposed \textbf{NORM} on the experimental cases with the existing representative operator learning methods \textbf{FNO}, \textbf{DeepONet}, \textbf{POD-DeepONet}, and a popular GNN architecture \textbf{GraphSAGE}. A brief introduction to each method is given below to help the reader understand the architecture and parameter setting of each model.
 
\textbf{NORM} can learn the mapping between functions on Riemannian manifolds. The proposed NORM consists of multiple encoder-approximator-decoder blocks, in which the encoder and the decoder are constructed as the spectral decomposition and the spectral reconstruction on the eigenfunctions of the Laplace–Beltrami operator (LBO). The LBO eigenfunctions of the geometric domain and its pseudo-inverse can be pre-calculated before training. Note that Darcy problem, Pipe turbulence and Composites cases adopt the structure of Fig. \ref{fig:3manifolds}a. Heat transfer case adopts the structure of Fig. \ref{fig:3manifolds}b. Blood flow case follows the Fig. \ref{fig:3manifolds}c, where the encoder and decoder of several ending L-layers employ both LBO eigenfunctions of $\cY$ and Fourier basis to process the output spatiotemporal functions.

\textbf{FNO} \cite{li2020fourier} parameterises the integral operators in the Fourier domain, and then the high-dimensional operator mapping can be transferred to the low-dimensional discretisation-invariant parameterisation of the few frequency modes. Since the original FNO cannot deal with the irregular geometric domain, the mesh interpolation solution from the paper \cite{lu2022comprehensive} is adopted to construct a regular mesh for the FNO. And the final prediction error is calculated on the original irregular grid by the second interpolation from the regular grid to the irregular grid.  Considering the prohibitive computational burden of the spatial mesh interpolation of 3D parts, FNO is only implemented in two cases: Darcy flow and pipe turbulence. And the interpolation resolution ratios are 101*101 and 32*128 for the Darcy flow and pipe turbulence cases, respectively.

\textbf{DeepONet} \cite{lu2021learning} is a neural operator framework based on the universal approximation theorem. The branch net of DeepONet encodes the input function, and another trunk net encodes the grid coordinates to be queried for the output function. The combination of the two networks enables the function output that can provide the prediction result of any point in the domain.

\textbf{POD-DeepONet} \cite{lu2022comprehensive} is the latest variant of DeepONet, in which Proper Orthogonal Decomposition (POD) is performed on the training data to compute the bases for output data. The POD bases are used as the trunk net (The POD basis can be precomputed before training, no training required.), and the branch network can directly learn the weights of POD bases.

\textbf{GraphSAGE} \cite{hamilton2017inductive} is a popular GNN architecture that uses SAGE convolutions, which is an inductive learning framework that can utilize the attribute information of the vertex to effectively generate the unknown vertex embedding.

Due to the different learning problems of each case, it is difficult to adopt an exact same set for each methods. The detailed architecture and parameters setting of each method for different cases are summarised in Table \ref{tab_para}, in which $d_m$ and $d_t$ mean the number of LBO/POD basis and Fourier basis, respectively. $d_v$ denotes the channel number after the mapping $\cP$.

\begin{table}[h]%
\scriptsize
\vspace*{-2pt}
\centering
\caption{ The model setting of different methods in all experiment cases.} \label{tab_para}%
\renewcommand\arraystretch{1.4}
\setlength{\tabcolsep}{3.5pt}
\begin{tabular*}{1\textwidth}{ll lllll}
\toprule
\textbf{Methods} & \textbf{Setting} & \textbf{Darcy problem}  & \textbf{Pipe turbulence } & \textbf{Heat transfer}  & \textbf{Composite}     & \textbf{Blood flow} \\
\midrule

\multirow{3}{4em}{Data size}    
& Train data  &1000     & 300    &100     & 400   & 400 \\
& Test data   &200      & 100    &100     & 100  &  100 \\
& Batch size  &100      & 50     &10      & 20  &  10   \\
\midrule

\multirow{4}{4em}{GraphSage}    
& SAGE-Convs       & 4     &   2         &  -   & 4        & - \\
& Hidden features  & 32     &   16        &  -   & 64        & - \\
& Linear-Layers    & 32*32*1     &   16*16*1   &  -   & 64*64*64*1        & - \\
& Epoches          & 2000     &   500       &  -   & 2000        & - \\
\midrule

\multirow{3}{4em}{DeepONet}    
&Branch net   & 256*256*100      & 256*256*32 & 256*256*100      & 256*256*100  &   256*256*256  \\
&Trunck net   & 128*128*128*100  & 128*128*32 & 128*128*128*100  & 128*128*128*100   &   256*256*256*256 \\
&Epoches      & 5000             & 3000       & 5000             & 5000   &  1000  \\
\midrule

\multirow{3}{4em}{POD-DeepONet}    
&$d_m$         &64     & 128   & 64  & 128  &  64\\
&Branch net    &Ref\cite{lu2022comprehensive}       &  Ref\cite{lu2022comprehensive}    & Ref\cite{lu2022comprehensive}    & Ref\cite{lu2022comprehensive} & 512*512\\
&Epoches       &5000   & 1000 & 5000  & 5000   &  1000  \\
\midrule

\multirow{4}{4em}{FNO}    
&$d_m$    &[20,20]    & [16,16] & -  & -  &-  \\
&$d_v$    &32         & 32  & -   &-  &- \\
&F-Layers &4          & 4   & -   & - &- \\
&Epoches  &1000       & 1000 &- & -   &-    \\
\midrule

\multirow{4}{4em}{NORM}    
&$d_m$   &128    & 128  & 128  & 128  &  64\\
&$d_t$   &-    & -  & -  & -  &  16\\
&$d_v$   &32     & 32   & 64    & 32  &  16  \\
&L-layers  &4     & 4   & 4     & 4   &   5  \\
&Epoches   &1000  & 1000& 2000  & 2000   &  500  \\

\bottomrule
\end{tabular*}\vspace*{-3pt}  
\label{tab: tab_para}
\end{table}

\clearpage
\section{Supplementary experimental results}

\begin{figure}[h]
\centering
\includegraphics[width=\textwidth]{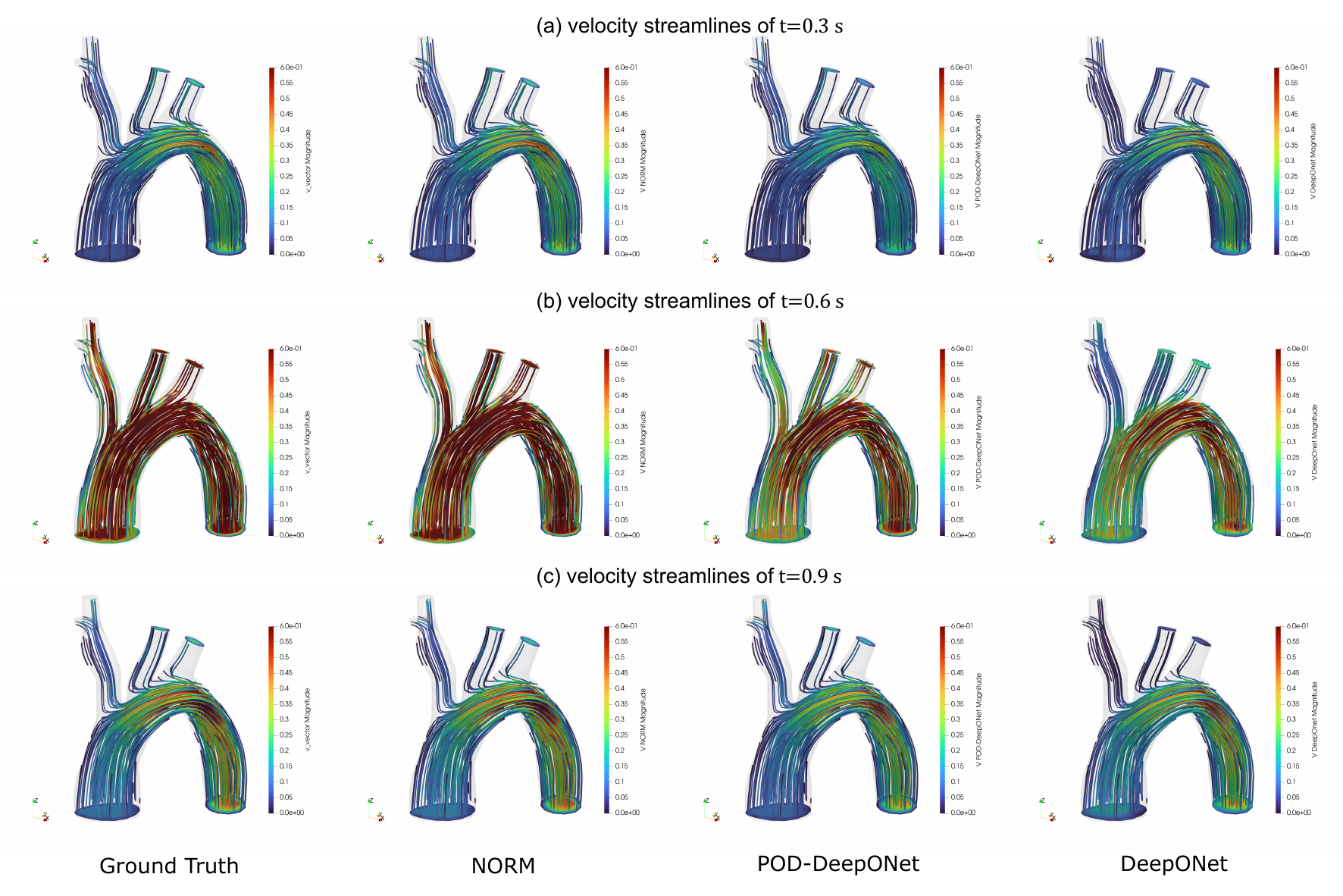}
\caption{Comparison visualisations of velocity field at different moments.}
\label{fig: Supp_blood_results}
\end{figure}

\newpage

\FloatBarrier

\bibliographystyle{unsrt}

\bibliography{supplement}